\newcommand{\R}{\mathbb R}
\newcommand{\N}{\mathbb N}
\newcommand{\nf}{\text{nf}}
\newtheorem{theorem}{Theorem}
\newtheorem{proposition}{Proposition}
\newtheorem{corollary}{Corallary}
\newtheorem{definition}{Definition}
\newtheorem{observation}{Observation}
\newtheorem{lemma}{Lemma}
\begin{document}

\title{A Scalable Lower Bound for the Worst-Case Relay Attack Problem on the Transmission Grid}
\author{Emma~S.~Johnson$^{1,2}$, Santanu~S.~Dey$^{2}$\thanks{This work was supported by Sandia National Laboratories' Laboratory Directed Research and Development (LDRD) program.
		Sandia National Laboratories is a multimission laboratory
		managed and operated by National Technology and Engineering
		Solutions of Sandia, LLC, a wholly owned subsidiary
		of Honeywell International, Inc., for the U.S. Department
		of Energy’s National Nuclear Security Administration under
		contract DE-NA0003525. SAND NO. 2021-10211 O.
		The views expressed in the article do not necessarily represent the views of the U.S. Department of Energy or the United States Government.} \\
	\small $^{1}$School of Industrial and Systems Engineering, Georgia Institute of Technology, Atlanta, GA, USA. \\
	\small ejohnson335@gatech.edu, santanu.dey@isye.gatech.edu \\
	\small $^{2}$Sandia National Laboratories, Albuquerque, NM, USA \\
}

\maketitle

\begin{abstract}
We consider a bilevel attacker-defender problem to find the worst-case attack on the relays that control transmission grid components.
The attacker infiltrates some number of relays and renders all of the components connected to them inoperable, with the goal of maximizing load shed.
The defender responds by minimizing the resulting load shed, re-dispatching using a DC optimal power flow (DCOPF) problem on the remaining network.
Though worst-case interdiction problems on the transmission grid have been studied for years, there remains a need for exact and scalable methods.
Methods based on using duality on the inner problem rely on the bounds of the dual variables of the defender problem in order to reformulate the bilevel problem as a mixed integer linear problem (MILP).
Valid dual bounds tend to be large, resulting in weak linear programming relaxations and hence making the problem more difficult to solve at scale. Often smaller heuristic bounds are used, resulting in a lower bound.
In this work we also consider a lower bound, where instead of bounding the dual variables, we drop the constraints corresponding to Ohm's law, relaxing DCOPF to capacitated network flow.
We present theoretical results showing that, for uncongested networks, approximating DCOPF with network flow yields the same set of injections, and thus the same load shed, which suggests that this restriction likely gives a high-quality lower bound in the uncongested case.
Furthermore, we show that in the network flow relaxation of the defender problem, the duals are bounded by 1, so we can solve our restriction exactly.
Last, because the big-M values in the linearization are equal to 1 and network flow has a well-known structure, we see empirically that this formulation scales well computationally with increased network size.
Through empirical experiments on 16 networks with up to 6468 buses, we find that this bound is almost always as tight as we can get from guessing the dual bounds, even for congested networks where the theoretical results do not hold. In addition, calculating the bound is approximately 150 times faster than achieving the same bound with the reformulation guessing the dual bounds.
\end{abstract}

\noindent {\bf Keywords: }
Bilevel programming; interdiction; mixed integer programming

\section{Introduction}\label{sec:intro}
As the power grid becomes increasingly decentralized and networked, so does the potential for damaging cyber attacks.
As is pointed out in \cite{IdahoReport}, the United States electric grid was not originally designed to be networked, and control systems continue to become more complex.
Both frequency and severity of cyber attacks on the grid have increased in the United States, and as smart grid capabilities continue to expand, distributed control of the grid could introduce additional vulnerabilities.
Unlike physical attacks, which are more likely to affect a localized region of the grid, cyber attacks have the potential to infiltrate control centers, meaning an adversary could gain control over operations spanning large portions of the grid.
For example, this was seen in the December 2015 cyber attack on Ukraine's power system, which affected about 225,000 customers.
The attackers took control of an interface which let them open breakers, directly cutting power to customers. 
Simultaneously, they rendered the communication system useless with a denial of service attack (\cite{SunHL2018}).
It is therefore increasingly critical to protect the grid against large and geographically disparate attacks. 

In this paper, we study a bilevel optimization problem which seeks to determine a worst-case cyber attack on the transmission grid.
Such a model could be used to assess the vulnerability of the grid, and to decide portions of the grid which should be further protected, or perhaps air-gapped from the rest of the network.
Additionally, this bilevel problem could be a subproblem in long-term planning problems such as physical and cyber network design problems seeking to minimize the risk of cyber attacks.
The model we solve was originally presented in \cite{CastilloACS2019}.
We will refer to it as the {\it worst-case relay attack problem}.
The attacker, seeking to maximize load shed, can choose a number of relays to infiltrate, constrained by a budget parameter.
All of the physical grid components these relays control are rendered inoperable, and the defender redispatches by solving a DC optimal power flow (DCOPF) model in order to minimize load shed.

Several prior works have considered a similar model, though in these models, the attacker targets grid components directly rather than relays.
Note that the worst-case relay attack problem is no more difficult than interdiction problems which consider direct attacks on grid components.
The problem of a worst-case grid attack was introduced in \cite{SalmeronWB2004}, and solved with a heuristic version of Benders decomposition.
The subsequent work on methodology includes both exact and heuristic methods, but the exact methods rely on having strong bounds on the duals of the DCOPF linear program in order to scale in network size and in the size of the attacker's budget.
	Thus, it is common to use heuristic bounds on the dual variables in these methods, and solve the resulting restriction of the original problem.

In \cite{alvarez} and \cite{SalmeronWB2004a}, the authors propose replacing the defender problem with its dual, linearizing the resulting bilinear terms, and solving the problem as a mixed-integer linear program (MILP).
This technique only works for interdiction problems (that is, it only works on min-max problems), and for the method to be exact, the linearization requires valid bounds on the dual variables of the defender problem. 
The authors of \cite{MottoAG2005} adopt the same bilevel formulation, and reformulate the problem by leaving both the primal and dual variables of the inner problem in the model, generalizing the methodology beyond min-max problems and achieving better scalability in terms of attack budget.
\cite{Arroyo2010} considers a worst-case attack problem where only lines can be attacked, comparing the duality-based single-level reformulation with a mathematically equivalent reformulation using the Karush-Kuhn-Tucker (KKT) optimality conditions of the defender problem.
Again, both of these methods are exact with valid bounds on the duals (or lagrange multipliers). 
The author finds that, on a 24-bus system with attack budgets of up to 16 lines, the duality-based method empirically outperforms the KKT-based method by a difference of several orders of magnitude.

In \cite{SalmeronB2009}, the authors present a generalized Benders Decomposition algorithm based on the assumption that the total load shed cannot increase by more than the capacity of any one grid component when that component is attacked.
This algorithm is capable of solving the problem on networks with more than 5000 buses, but the scaling is not shown to accommodate increases in the size of the attack budget. 
For physical attacks, limited attack budgets are likely realistic, but for cyber attacks, attackers are not limited by physical resources and could therefore be able to attack large portions of the grid that might not be geographically correlated.
In addition, the method is only exact if the assumption holds, which is not necessarily true in congested networks.

The authors of \cite{SundarCNB2018} consider a probabilistic version of the problem which they solve with an algorithm similar to the benders approach in \cite{SalmeronB2009}.
	They compare several formulations for power flow in the defender problem, including the network flow restriction we analyze in this paper.
	The computational study explicitly shows the boundaries of tractability in terms both the network size and the attacker budget.
	Their approach scales to networks with up to 2,383 buses, and attack budgets of up to 5 components.
	The authors of \cite{SundarMBP2019} consider yet another variation of the problem in which the attacks are assumed to be spatially or topologically correlated. 
	With a similar benders approach, they are able to solve on networks up to 240 buses with attack budgets up to 6 lines.
	In \cite{SundarMBP2021}, the authors revisit this model and develop a cut generation algorithm based on a penalty-based reformulation.
	In this methodology, the only bounds on the DCOPF needed are bounds on the dual variables corresponding to the thermal limit constraints.
	Some of these are fixed to 0 for lines that can never be at full capacity.
	The authors compare an exact version of their method, where the duals that are not fixed to 0 are bounded by the total load in the system, to a heuristic method where they bound these duals by 1.
	The number of iterations required for the heuristic version to converge tends to be at least an order of magnitude less than the exact method.

\begin{table}
	\centering
	\footnotesize
	\caption{Summary of scalability of previous literature on worst-case attack problem in terms of number of buses in the network as well as cardinality in the attack budget.\label{table:sad-scalability}}
	\begin{tabular}{l | r r}
			\hline 
		Citation & Maximum Network Size & Maximum Attack Budget \\
		\hline
		\cite{SalmeronWB2004a} &  48 buses & 24 components \\
		\cite{SalmeronWB2004} & 48 buses & 40 components \\
		\cite{alvarez} & 24 buses & 6 components\\
		\cite{MottoAG2005} & 48 buses & 40 components\\
		\cite{SalmeronB2009} & $> 5000$ buses & 18 components \\
		\cite{Arroyo2010} & 24 buses & 16 lines \\
		\hline
	\end{tabular}
	{}
\end{table}

In summary, most of the existing methodology requires valid bounds on the dual variables of the DCOPF linear program to be exact. 
	Since these are large, the scalability of exact methods is limited in terms of the size of the network and the size of the attack budget, as is summarized in Table \ref{table:sad-scalability}.
The assumptions in these methods are symptoms of a broader problem in bilevel optimization:
All methods of dualizing the inner problem in order to combine it with the outer problem require relatively tight upper bounds on the dual variables of the inner problem (\cite{SmithS2020}).
Though it is common to use heuristics to calculate big-M values with which to linearize the KKT conditions of the inner problem, \cite{PinedaM2019} show that these heuristics can fail, even for bilevel problems with linear programming leader and follower problems.
Furthermore, \cite{KleinertLPS2019} show that verifying the correctness of big-M values in bilevel optimization is as hard as solving the original problem.
They suggest that, if we choose to solve bilevel problems by reformulating the follower's problem using duality or its KKT conditions, then we will have to resort to problem-specific information in order to generate valid big-M values.
Last, while methods such as covering decomposition from \cite{IsraeliW2002} are both exact and applicable to this problem (and do not require a big-M), note that the cuts to block previously-generated attacks are included in the benders algorithm from \cite{SalmeronB2009}, implying that without enhancement, this is not a scalable approach.

Despite the fact that the scalability of the existing approaches for the worst-case attack model is limited, there has been continued interest in the literature in solving extensions of this model and more complicated models which include this model.
\cite{BienstockV2010} develop a problem-specific algorithm for a variation of the problem where the attacker minimizes the number of lines necessary to attack in order to achieve a prespecified amount of load shed.
In addition, the authors provide a novel model in which the attacker antagonistically modifies the resistances of the power lines.
Further extensions include the addition of transmission line switching as an option for the defender in \cite{DelgadilloAA2010} and \cite{ZhaoZ2013}, inclusion of both short- and medium-term impacts of attacks in \cite{WangB2014}, modeling attacks which unfold over time in \cite{SayyadipourYL2016}, modeling coordinated cyber and physical attacks in \cite{LiSAA2016}, and, as previously mentioned, adding the assumption of spacially correlated physical attacks in \cite{SundarMBP2019} and \cite{SundarMBP2021}.
In addition, there has been interest in trilevel planning problems such as defensive hardening of the network in \cite{YuanZZ2014}, \cite{AlguacilDA2014}, and \cite{WuC2017}.

In this work, we revisit the network flow restriction from \cite{SundarCNB2018}.
That is, instead of solving DCOPF in the defender problem, we drop the Ohm's law constraints, simplifying the inner problem to capacitated network flow.
This is a restriction of the original problem, as it expands the defender's feasible region, thus restricting the attacker's options.
Applying it to get a lower bound for the worst-case relay attack problem, we show it can be used on networks with more than 6000 buses with attack budgets ranging from small numbers of relays up to 30\% of the network, enough to shed all of the load.
While such an approach only gives a lower bound, we formally show that, when line capacities are large enough, the optimal objective value of the network flow restriction is the same as that of the original worst-case relay attack problem. 
This is because network flow is a good approximation of DCOPF when both formulations are projected into the space of injections.
That is, the line flows in the optimal solution of the network flow restriction may be dramatically different from those of DCOPF, but the load shed and generator dispatch will be the same.
Since the formulation measures the severity of the attack in terms of load shed, the accuracy of the line flows will not effect the attack solution unless the network is congested.
While we do expect the attacker to take advantage of his ability to create congestion, we find empirically that, even on congested instances, the bound we get from the network flow restriction is almost always as tight as we can find when we solve the original problem reformulated with improvised dual bounds.

As is observed in \cite{RoaldM2019}, in DCOPF, very few line limit constraints are ever tight, even accounting for variation in both demand and generation costs.
In other words, in practice, transmission networks are rarely congested.
Thus, it is not unexpected that the network flow restriction bound appears to be high quality.
In addition, we find that we can obtain this bound within 20 minutes, even on large-scale networks with difficult-to-solve attack budgets.
Though this is likely because capaciated network flow is a familiar and highly-optimized problem for commercial solvers, it is worth mentioning that network flow interdiction is itself a well-studied problem with some promising theoretical results which might eventually be applied to solve the network flow restriction.
For example, \cite{ChestnutZ2017} give an approximation algorithm for an interdiction problem where the attacker eliminates edges in order to minimize the maximum $s$-$t$ flow.
With slight modifications (i.e., modeling generators and loads with mock lines to a super source and super sink respectively), the network flow restriction can be modeled as a maximum $s$-$t$ flow, so \cite{ChestnutZ2017} and other combinatorial methods are applicable to it.

In summary, our contributions are:
	\begin{enumerate}
		\item A theoretical analysis of the network flow lower bound showing its quality on uncongested networks, and
		\item A computational study on 16 networks of various size and levels of congestion, showing both that the network flow lower bound scales well computationally and that the quality of the bound is comparable to that of methods using heuristic bounds on the dual variables of DCOPF, even for congested networks where the theoretical results do not hold.
\end{enumerate}
\noindent In the remainder of this paper, we introduce the worst-case relay attack model in Section \ref{sec:formulation}, introduce the network flow restriction in Section~\ref{sec:nf-restriction}, state the main theoretical results related to it in Section \ref{sec:theoretical-results}, present a computational study demonstrating its efficacy in Section \ref{sec:computational-results}, and provide concluding thoughts in Section \ref{sec:conclusions}.

\section{Problem Formulation}\label{sec:formulation}

In this section, we introduce the notation we will use throughout the paper, as well as the worst-case relay attack model itself.

\subsection{Nomenclature}

We will use the following notation to describe the model.

\subsubsection{Sets}

\begin{description}
	\item [$\mathcal{K}$] Set of transmission lines
	\item [$\mathcal{G}$] Set of generators
	\item [$\mathcal{B}$] Set of buses
	\item [$\mathcal R$] Set of relays
	\item [$\mathcal{G}_b$] Set of generators at bus $b$
	\item [$\mathcal K^+_b$] Set of lines to bus $b$
	\item [$\mathcal K^-_b$] Set of lines originating at bus $b$
	\item [$\mathcal R_k$] Set of relays which control line $k$
	\item [$\mathcal R_g$] Set of relays which control generator $g$
	\item [$\mathcal R_b$] Set of relays which control bus $b$
\end{description}
Note that $\mathcal R_k$, $\mathcal R_g$, and $\mathcal R_b$ are not necessarily mutually disjoint. That is, a relay can control multiple grid components.

\subsubsection{Parameters}

\begin{description}
	\item [$B_k$] Susceptance of transmission line $k$
	\item [$\overline{F}_k$] Thermal limit for transmission line $k$
	\item [$o(k)$] Origin bus of transmission line $k$
	\item [$d(k)$] Destination bus of transmission line $k$
	\item [$b(g)$] Bus containing generator $g$
	\item  [$\overline{P}_g$] Upper limit of generator $g$ dispatch level
	\item [$D_b$] Demand at bus $b$
	\item [$U$] Attacker budget
\end{description}

\subsubsection{Variables}

\begin{description}
	\item [$\delta_r$] Indicator of whether or not relay $r$ is attacked
	\item [$u_g$] Indicator of whether or not generator $g$ is available
	\item [$v_k$] Indicator of whether or not line $k$ is available
	\item [$w_b$] Indicator of whether or not the load at bus $b$ can be served
	\item [$f_k$] Power flow through transmission line $k$
	\item [$p_g$] Generator dispatch level for generator $g$
	\item [$\theta_b$] Phase angle at bus $b$
	\item [$l_b$] Load shed at bus $b$
	\item [$\lambda_k^+$] Dual of line flow upper bound
	\item [$\lambda_k^-$] Dual of line flow lower bound
	\item [$\mu_b$] Dual of power balance constraint
	\item [$\alpha_b$] Dual of load shed lower bound
	\item [$\beta_b$] Dual of load shed upper bound
	\item [$\gamma_g$] Dual of generator dispatch upper bound
\end{description}

\subsection{Worst-Case Relay Attack Formulation}
The bilevel model is as follows:
\begin{subequations}\label{relay-attack-prob}
	\begin{align}
	\max_{\delta, u, v, w} \min_{f, p, l, \theta} \quad &\sum_{b \in \mathcal B} l_b \label{eq:ls-obj} \\
	\text{s.t.} \quad &\sum_{r \in \mathcal R} \delta_r \leq U \label{eq:budget} \\
	& \sum_{r \in \mathcal R_k} \delta_r \geq 1 - v_k & \forall k \in \mathcal K \quad & \phantom{(\gamma)} \label{eq:line-attacked} \\
	& \sum_{r \in \mathcal R_g} \delta_r \geq 1 - u_g & \forall g \in \mathcal G \quad & \phantom{(\gamma)} \label{eq:generator-attacked} \\
	& \sum_{r \in \mathcal R_b} \delta_r \geq 1 - w_b & \forall b \in \mathcal B \quad & \phantom{(\gamma)} \label{eq:bus-attacked}\\
	&\delta_r \leq 1 - v_k & \forall k \in \mathcal K, r \in \mathcal R_k \quad & \phantom{(\gamma)} \label{eq:relay-to-line}\\
	&\delta_r \leq 1 - u_g & \forall g \in \mathcal G, r \in \mathcal R_g \quad & \phantom{(\gamma)} \label{eq:relay-to-generator}\\
	&\delta_r \leq 1 - w_b & \forall b \in \mathcal B, r \in \mathcal R_b \quad & \phantom{(\gamma)} \label{eq:relay-to-bus}\\
	& \delta_r \in \{0, 1\} & \forall r \in \mathcal R \quad & \phantom{(\gamma)} \label{eq:delta-binary} \\
	& v_k \in \{0, 1\} & \forall k \in \mathcal K \quad & \phantom{(\gamma)} \label{eq:v-binary} \\
	& u_g \in \{0,1\} & \forall g \in \mathcal G \quad & \phantom{(\gamma)} \label{eq:u-binary}\\
	& w_b \in \{0,1\} & \forall b \in \mathcal B \quad & \phantom{(\gamma)} \label{eq:w-binary} \\
	&f_k \leq B_k(\theta_{o(k)} - \theta_{d(k)}) + 2\pi B_k(1 - v_k) & \forall k \in \mathcal K \quad & \phantom{(\xi^+)} \label{eq:ohms-nonlinear-ub}\\
	&f_k \geq B_k(\theta_{o(k)} - \theta_{d(k)}) - 2\pi B_k(1 - v_k) & \forall k \in \mathcal K \quad & \phantom{(\xi^-)} \label{eq:ohms-nonlinear-lb}\\
	&\sum_{k \in \mathcal K^+_b} f_k - \sum_{k \in \mathcal K^-_b} f_k + \sum_{g \in \mathcal G_b} p_g + l_b = D_b & \forall b \in \mathcal B \quad & (\mu) \label{eq:balance} \\
	& D_b(1 - w_b) \leq l_b \leq D_b & \forall b \in \mathcal B \quad & (\alpha, \beta) \label{eq:ls-bounds} \\
	&  -\overline F_kv_k \leq f_k \leq \overline F_kv_k & \forall k \in \mathcal K  \quad & (\lambda^+, \lambda^-) \label{eq:line-flow-bounds}\\
	&0 \leq p_g \leq \overline P_gu_g& \forall g \in \mathcal G \quad & (\gamma) \label{eq:generation-bounds} \\
	&l_b \geq 0 & \forall b \in \mathcal B \quad & \phantom{(\gamma)} \label{eq:ls-domain} \\
	&-\pi \leq \theta_b \leq \pi & \forall b \in \mathcal B  \quad & \phantom{(\kappa^+, \kappa^-)} \label{eq:angle-bounds}
	\end{align}
\end{subequations}
The attacker maximizes the total load shed and the defender minimizes it in (\ref{eq:ls-obj}).
Constraint (\ref{eq:budget}) ensures that the attacker does not exceed the cardinality budget for the number of relays he can attack.
Constraints (\ref{eq:line-attacked})-(\ref{eq:bus-attacked}) enforce that if a component is unavailable, a relay which connects to it must have been attacked.
The following three sets of constraints (\ref{eq:relay-to-line})-(\ref{eq:relay-to-bus}) enforce that if a relay connected to a line, generator, or load (respectively) is attacked, that component is unavailable to the defender.
Constraints (\ref{eq:delta-binary})-(\ref{eq:w-binary}) give the domain of the attacker's variables.
The defender's feasible region is defined by constraints (\ref{eq:ohms-nonlinear-ub})-(\ref{eq:angle-bounds}).
Constraints (\ref{eq:ohms-nonlinear-ub})-(\ref{eq:ohms-nonlinear-lb}) represent Ohm's law when $v_k = 1$ and are trivial when $v_k = 0$.
Constraint (\ref{eq:balance}) enforces power balance at each node.
Constraints (\ref{eq:ls-bounds})-(\ref{eq:angle-bounds}) enforce variable bounds and turn off components which are unavailable as a result of the attack.
Note that we assume that the generator dispatch lower bound is 0.
We do this to ensure that the defender problem is feasible for all attacks, since it is always feasible to generate no power and shed all the load.

\section{Network Flow Restriction}\label{sec:nf-restriction}

We propose solving a restriction of problem (\ref{relay-attack-prob}) in which we drop constraints (\ref{eq:ohms-nonlinear-ub}) and (\ref{eq:ohms-nonlinear-lb}), which consequently removes the phase angle variables $\theta$.
That is, we propose solving:
\begin{equation}\label{nf-restriction}
\begin{aligned}
\max_{\delta, u, v, w} \min_{f, p, l, \theta} \quad &\sum_{b \in \mathcal B} l_b \\
\text{s.t.} \quad & \text{(\ref{eq:budget})-(\ref{eq:w-binary}), (\ref{eq:balance})-(\ref{eq:ls-domain}).}
\end{aligned}
\end{equation}
Note that (\ref{nf-restriction}) gives a lower bound to problem (\ref{relay-attack-prob}) since it expands the feasible region of the defender, giving him more options to respond to the attack, and therefore decreasing the load shed from the attack.
We formulate a single-level bilinear reformulation of (\ref{nf-restriction}) by taking the dual of the defender problem:
\begin{subequations}\label{network-flow-single-level}
	\begin{align}
	\max_{\delta, u, v, w, \alpha, \beta, \lambda^+, \lambda^-, \gamma, \mu} \quad &\begin{aligned}& - \sum_{k \in \mathcal K} \overline F_k ( v_k \lambda_k^+ + v_k \lambda_k^-) \\
	&- \sum_{g \in \mathcal G} \overline P_g u_g \gamma_g + \sum_{b \in \mathcal B} D_b((1 - w_b) \alpha_b + \mu_b - \beta_b)
	\end{aligned} \label{nf-single-level-1}\\
	\text{s.t.} \quad &\text{(\ref{eq:budget})-(\ref{eq:w-binary})} \nonumber \\
	&\lambda_k^+  - \lambda_k^- + \mu_{d(k)} - \mu_{o(k)} = 0 \quad &\forall k \in \mathcal K \quad & (f) \label{nf:dual-flow} \\
	& \mu_{b(g)} - \gamma_g \leq 0 \quad &\forall g \in \mathcal G \quad & (p) \label{nf:dual-dispatch}\\
	& \alpha_b + \mu_b - \beta_b \leq 1 \quad &\forall b \in \mathcal B \quad & (l) \label{nf:dual-ls}\\
	& \alpha_b \geq 0 \quad &\forall b \in \mathcal B\quad & \phantom{(f)} \label{nf:dual-nonneg1} \\
	& \beta_b \geq 0 \quad &\forall b \in \mathcal B\quad & \phantom{(f)}\\
	& \lambda_k^+ \geq 0 \quad &\forall k \in \mathcal K \quad & \phantom{(f)}\\
	& \lambda_k^- \geq 0 \quad &\forall k \in \mathcal K \quad & \phantom{(f)}\\
	& \gamma_g \geq 0 \quad &\forall g \in \mathcal G.\quad & \phantom{(f)} \label{nf:dual-actually-last}
	\end{align}
\end{subequations}
Note that the objective function is bilinear, but since all the bilinear terms are products of a binary and a non-negative continuous variable, it is easily linearized if we have bounds on the continuous variables.
We can show that for this problem, 1 is a valid upper bound for all the dual variables.

\begin{observation}\label{obs:dual-bounds-are-1}
	In the formulation (\ref{nf-single-level-1})-(\ref{nf:dual-actually-last}), it is valid to add the constraints
	\begin{align*}
	&-1 \leq \mu_b \leq 1 & \forall b \in \mathcal B \\
	&\alpha_b \leq 1 & \forall b \in \mathcal B \\
	& \beta_b \leq 1 & \forall b \in \mathcal B \\
	& \lambda_k^+ \leq 1 &\forall k \in \mathcal K \\
	& \lambda_k^- \leq 1 & \forall k \in \mathcal K \\
	& \gamma_g \leq 1 & \forall g \in \mathcal G.
	\end{align*}
\end{observation}
The proof of Observation \ref{obs:dual-bounds-are-1} relies on both the fact that, after the constraints corresponding to Ohm's law are removed, the inner problem's constraint matrix is totally unimodular and the fact that all the coefficients of the objective are 1. Note that without the latter property, our results may not hold.
We give a formal proof of Observation \ref{obs:dual-bounds-are-1} in Appendix \ref{sec:proofs}, section \ref{sec:proof-dual-bounds-are-1}.
The application of the observation yields a mixed integer linear reformulation of (\ref{network-flow-single-level}):
\begin{subequations}\label{network-flow-single-level-linear}
	\begin{align}
	\max_{\delta, u, v, w, \alpha, \beta, \lambda^+, \lambda^-, \gamma, \mu, \overline \lambda^+, \overline \lambda^-, \overline \gamma, \overline \alpha} \quad &\begin{aligned}& - \sum_{k \in \mathcal K} \overline F_k ( \overline \lambda_k^+ + \overline \lambda_k^-) \\
	&- \sum_{g \in \mathcal G} \overline P_g \overline \gamma_g + \sum_{b \in \mathcal B} D_b(\overline  \alpha_b + \mu_b - \beta_b)
	\end{aligned} \\
	\text{s.t.} \quad &\text{(\ref{eq:budget})-(\ref{eq:w-binary}), (\ref{nf:dual-flow})-(\ref{nf:dual-actually-last})} \nonumber \\
	&\overline \lambda^+_k \leq v_k & \forall k \in \mathcal K \\
	& \lambda^+_k - 1 + v_k \leq \overline \lambda^+_k \leq \lambda_k^+ &\forall k \in \mathcal K \\
	&\overline \lambda^-_k \leq v_k & \forall k \in \mathcal K \\
	& \lambda^-_k - 1 + v_k \leq \overline \lambda^-_k \leq \lambda_k^-  &\forall k \in \mathcal K \\
	& \overline \gamma_g \leq u_g & \forall g \in \mathcal G \\
	& \gamma_g - 1 + u_g \leq \overline \gamma_g \leq \gamma_g  & \forall g \in \mathcal G \\
	& \overline \alpha_b \leq 1 - w_b & \forall b \in \mathcal B \\
	& \alpha_b - w_b \leq \overline \alpha_b \leq \alpha_b  & \forall g \in \mathcal G \\
	&\overline \lambda^+_k \geq 0 & \forall k \in \mathcal K \\
	&\overline \lambda^-_k \geq 0 & \forall k \in \mathcal K \\
	&\overline \gamma_g \geq 0 & \forall g \in \mathcal G \\
	&\overline \alpha_b \geq 0 & \forall b \in \mathcal B.
	\end{align}
\end{subequations}
Note that the optimal value of (\ref{network-flow-single-level-linear}) is a lower bound to that of (\ref{relay-attack-prob}).

Suppose $z^*$ is the optimal objective function value of (\ref{network-flow-single-level-linear}) and let $(\delta^*, u^*, v^*, w^*)$ be optimal solution of (\ref{network-flow-single-level-linear}) corresponding to the attacker's variables. 
Since we have removed the constraints corresponding to Ohm's law, it is possible that if we fix the attacker variables $(\delta, u, v, w)$ to $(\delta^*, u^*, v^*, w^*)$, the defender has no DCOPF-feasible solution.
In particular, this case implies that fixing $(\delta, u, v, w)$ to $(\delta^*, u^*, v^*, w^*)$ will lead to a higher load shed than $z^*$. Thus, in order to (i) obtain a feasible solution to our original problem (\ref{relay-attack-prob}) and (ii) possibly improve the bound obtained from (\ref{network-flow-single-level-linear}), we apply the steps described in Algorithm~\ref{algo:1}.

\begin{algorithm}
	\SetAlgoNoLine
	\DontPrintSemicolon
	\KwIn{All parameters of the power network, relays, and budget}
	\KwOut{A  high quality feasible solution to (\ref{relay-attack-prob}).}
	Solve (\ref{network-flow-single-level-linear}). Let $(\delta^*, u^*, v^*, w^*)$ be optimal solution of (\ref{network-flow-single-level-linear}) corresponding to the attacker's variables. \;
	Fix $(\delta, u, v, w)$ to $(\delta^*, u^*, v^*, w^*)$ and solve the resulting defender's DCOPF problem (with Ohm's law). \label{second-step} \;
	Return $(\delta^*, u^*, v^*, w^*)$, and the solution to the DCOPF problem.  \;
	\caption{Network Flow Lower Bound}
	\label{algo:1}
\end{algorithm}
In the remainder of the paper, we call the the resulting lower bound obtained from the load shed corresponding to the solution returned by Algorithm~\ref{algo:1} the \emph{network flow lower bound (NFLB)}.

In Section \ref{sec:computational-results}, we will show that empirically we find that  (\ref{network-flow-single-level-linear}) is efficiently solvable, even for large networks, and, as far as can be measured,
NFLB is a high-quality lower bound.
In the following section, we give some theoretical results which provide intuition for the good quality of NFLB.

\section{Theoretical Analysis of the Quality of the Network Flow Restriction}\label{sec:theoretical-results}

Though there are various notions of congestion in power networks, in this paper we describe a network as congested when the thermal limits on the transmission lines prevent a solution with less load shed.
Note that, in a DCOPF model, a bound on the phase angle difference can also be a source of congestion, but for a line $k$, this is only the case when 
\begin{equation}\label{angle-congestion}
2B_k \Theta < \overline F_k,
\end{equation}
where $\Theta$ is the phase angle difference bound.
That is, phase angles will become the limiting factor in how much power can be moved through the network if the maximum phase angle difference multiplied by the susceptance provides a tighter bound on the line flow than the thermal limit does.
However, in the case that (\ref{angle-congestion}) is true, we can replace the thermal limit with the left-hand side of (\ref{angle-congestion}) and drop phase angle difference bounds from the problem.
Thus, without loss of generality, in this paper we model DCOPF without phase angle difference bounds and consider congestion to be caused by restrictive thermal limits.

In the following, we show that, when the thermal limits are sufficiently large, it is always possible to find a DCOPF solution with the same injections as a network flow solution on the same network.
Throughout this section, we will assume that there is exactly one generator per bus.
Buses which do not have a generator can be represented as having a generator with maximum capacity 0, and buses with multiple generators can be represented as having one generator with capacity set to the sum of the capacities of the originals.
This is again because we assume that the minimum dispatch for a generator is always 0.
For notational convenience, we first introduce some definitions.

\begin{definition}
	Represent the network as a digraph $G(\mathcal B, \mathcal K)$ and let $N \in \{0, 1, -1\}^{|\mathcal B| \times |\mathcal K|}$ be the node-arc incidence matrix (where buses are nodes and lines are arcs).
	Let $\{d_b\}_{b \in \mathcal B}$ be a set of injections such that $\sum_{b \in \mathcal B} d_b = 0$\footnote{In terms of the notation used to describe (\ref{relay-attack-prob}), for a bus $b$, $d_b = \sum_{g \in \mathcal G b}p_g - (D_b - l_b)$, i.e., the power generated at the bus minus the load served at the bus.}.
	Then we say:
	\begin{itemize}
		\item The injection vector $d$ is {\it flow-polytope feasible} if there exists a vector of flows that satisfies thermal limits and flow conservation given the nodal injection values, i.e., $d$ is flow-polytope feasible if there exists $f \in \R^{|\mathcal K|}$ such that
		\[
		Nf = d, \text{ and } |f_k| \leq \overline F_k, \; \forall k \in \mathcal K.
		\]
		\item The injection vector $d$ is {\it DCOPF feasible} if there exists a flow vector that satisfies thermal limits, flow conservation given the nodal injection values, and Ohm's law. That is, $d$ is DCOPF feasible if there exists $f \in \R^{|\mathcal K|}$ such that 
		\[
		Nf = d, |f_k| \leq \overline F_k, \; \forall k \in \mathcal K, \text{ and } \exists \; \theta \in \R^{|\mathcal B|} \text{ such that } f_k = B_k(\theta_{o(k)} - \theta_{d(k)}), \; \forall k \in \mathcal K.
		\]
		Note that we do not require that $\theta$ satisfy the bounds given in (\ref{eq:angle-bounds}).
	\end{itemize}
\end{definition}
The set of DCOPF feasible injections is contained in the set of flow-polytope feasible injections.
We will show that the reverse is also true for uncongested networks, that is, when the thermal limits are sufficiently large.

\begin{definition}
	Given a connected digraph $G(\mathcal B, \mathcal K)$, consider a partition of the nodes formed by removing all the cut-arcs in the underlying graph and labeling the sets of nodes in each of the resulting connected components as $V^1, V^2, \dots, V^m$.
	Let
	\[
	r(G) := \max_{1 \leq i \leq m} |V^i|.
	\]
	Let $E(V^i)$ be the set of arcs with both end points in $V^i$. We will call the set of arcs $\cup_{i = 1}^m E(V^i)$ {\it non-cut-arcs}.
\end{definition}
Note that, in power networks, a partition of the nodes into more than one non-empty set is unusual since a cut-arc represents a single point of failure.
Thus we expect that for many of these networks, $r(G) = |\mathcal B|$.
Intuitively, the non-cut-arcs are the only ones for which the thermal limits could restrict our ability to find DCOPF feasible flows for a set of injections.
This is because, on a tree (and in the absence of phase angle bounds), there always exist phase angles such that a flow-polytope feasible flow is also DCOPF feasible.
Thus, the injections are certainly feasible.
Stated more simply, Ohm's law poses no additional restriction on flows if there are no cycles in the network.
Thus, our notion of ``large enough" thermal limits only applies to arcs which appear in cycles.
Formally, we have the following theorem, which we will prove in Appendix \ref{sec:proofs}, section \ref{sec:proof-of-thm}:
\begin{theorem}\label{thm:thermal-limit-bound}
	Consider a DCOPF problem on a connected digraph $G(\mathcal B, \mathcal K)$.
	Let $\mathcal B = V^1 \cup V^2 \cup \cdots \cup V^m$ such that $V^i \cap V^j = \emptyset$ for all $i \ne j$, and when we contract the nodes in $V^i$ into one node, the resulting graph is a tree.
	Let $r(G) := \max_{1 \leq i \leq m} |V^i|$. (Note that we can always select $r(G) = |\mathcal B|$.)
	Recall $\overline F_k$ is the thermal limit on arc $k$.
	Let $B_{\max}$ and $B_{\min}$ be the maximum and minimum susceptance respectively.
	If $d \in \R^{|\mathcal B|}$ is flow-polytope feasible and 
	\begin{equation}\label{eq:the-bound}
	\overline F_k \geq \sqrt{\frac{B_{\max}}{B_{\min}}} \frac{\sqrt{ r(G) - 1}}{2}\Vert d\Vert _1 \; \forall k \in \cup_{i = 1}^m E(V^i),
	\end{equation}
	then $d$ is also a DCOPF feasible injection.
\end{theorem}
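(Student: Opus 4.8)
The plan is to work with the unique DC power flow induced by the injection vector $d$ and to bound its line flows one edge at a time. Since $G(\mathcal B,\mathcal K)$ is connected and $\sum_{b\in\mathcal B}d_b=0$, the weighted Laplacian system $N\,\mathrm{diag}(B)\,N^{\top}\theta=d$ is solvable, so there is a flow $f^{*}:=\mathrm{diag}(B)N^{\top}\theta$ with $Nf^{*}=d$ that satisfies Ohm's law $f^{*}_k=B_k(\theta_{o(k)}-\theta_{d(k)})$. Hence it suffices to prove $|f^{*}_k|\le\overline F_k$ for every $k\in\mathcal K$; together with $Nf^{*}=d$ and Ohm's law this exhibits a witness that $d$ is DCOPF feasible. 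Let $f^{0}$ be a flow witnessing flow-polytope feasibility of $d$, so $Nf^{0}=d$ and $|f^{0}_k|\le\overline F_k$ for all $k$.

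First I would dispatch the edges that are \emph{not} non-tree edges, i.e.\ the edges of the contracted tree. For such an edge $e$, removing it disconnects $G$ into the two sides $S_e$ and $\mathcal B\setminus S_e$ corresponding to the two components of $e$ in the contracted tree: any further edge crossing that cut would either be a tree edge (creating a cycle after contraction) or an internal edge of some $V^\ell$ (impossible since each $V^\ell$ lies entirely on one side), both contradictions. Flow conservation across this cut then forces $f_e=\sum_{b\in S_e}d_b$ for \emph{every} flow with $Nf=d$, so in particular $f^{*}_e=f^{0}_e$ and hence $|f^{*}_e|\le\overline F_e$.

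Now fix a non-tree edge $k\in E(V^i)$. The structural observation is that each $G_i:=(V^i,E(V^i))$ is connected — otherwise a path in $G$ joining two pieces of $V^i$ would close into a cycle after contraction — and the restriction $f^{*}|_{E(V^i)}$ is exactly the DC flow on $G_i$ for its own imbalance vector $\tilde d^i$ (the potential $\theta|_{V^i}$ certifies Ohm's law on $G_i$, and the potential flow on a connected graph is unique, equivalently it is the minimum-energy flow). I would then bound $\|\tilde d^i\|_1\le\|d\|_1$: since $\tilde d^i_b$ equals $d_b$ minus the net $f^{*}$-flow leaving $b$ on the tree edges incident to $b$, summing absolute values over $b\in V^i$ gives $\|\tilde d^i\|_1\le\sum_{b\in V^i}|d_b|+\sum_{e\text{ incident to }V^i}|f^{*}_e|$; using step two, $|f^{*}_e|\le\sum_{b\in(\text{far side of }e)}|d_b|$, and the far sides of the tree edges incident to $V^i$ (the parent subtree's complement, plus each child subtree) are pairwise disjoint subsets of $\mathcal B\setminus V^i$, so the second sum is at most $\sum_{b\notin V^i}|d_b|$. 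Finally a standard electrical-energy estimate on $G_i$ closes the argument: writing $\tilde d^i$ as a sum of source–sink pairs carrying total flow $\tfrac12\|\tilde d^i\|_1$ and applying the triangle inequality in the $L_{G_i}^{+}$-seminorm, the energy $\sum_{k'\in E(V^i)}(f^{*}_{k'})^2/B_{k'}$ is at most $(\tfrac12\|\tilde d^i\|_1)^2$ times the maximum pairwise effective resistance in $G_i$, which is at most a shortest path's resistance, i.e.\ at most $(|V^i|-1)/B_{\min}\le(r(G)-1)/B_{\min}$. Since $(f^{*}_k)^2/B_k$ is bounded by that energy, this yields $|f^{*}_k|\le\sqrt{B_{\max}/B_{\min}}\,\tfrac{\sqrt{r(G)-1}}{2}\|d\|_1\le\overline F_k$ by hypothesis \eqref{eq:the-bound}, completing the proof.

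I expect the two combinatorial facts to be the crux: that the tree edges carry flows forced by conservation, and — the more delicate one — that the induced imbalances satisfy $\|\tilde d^i\|_1\le\|d\|_1$, since this is precisely where the assumption ``contracting each $V^i$ yields a tree'' is used (it is what makes the far sides of the boundary tree edges disjoint). A secondary technical point needing care is the claim that the restriction of a DC flow to $G_i$ is again the DC flow on $G_i$, which is what lets the bound depend on $r(G)$ rather than on $|\mathcal B|$; the energy/effective-resistance estimate itself is routine once $\|\tilde d^i\|_1$ is controlled.
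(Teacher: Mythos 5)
Your proof is correct and reaches the paper's constant exactly, but it routes the two quantitative steps through different (equivalent) machinery, so a comparison is worthwhile. The skeleton is the same as the paper's: the crossing (tree) edges carry flows forced by the cut condition, so they inherit feasibility from the flow-polytope-feasible witness; the problem then localizes to each block $V^i$ with an induced imbalance of $\ell_1$-norm at most $\Vert d\Vert_1$; and the block DC flow is bounded by $\sqrt{B_{\max}/B_{\min}}\,\tfrac{\sqrt{r(G)-1}}{2}\Vert d\Vert_1$. Where you differ: (i) for the $\ell_1$ control on the induced imbalances, the paper peels leaves of the contracted tree inductively, whereas you bound all boundary corrections at once using the pairwise disjointness of the far sides of the tree edges incident to $V^i$ — cleaner, and it makes explicit where the tree hypothesis is used; (ii) for the per-block flow bound, the paper derives the injection-shift-factor formula, constructs a competitor by routing the imbalance on a spanning tree (each tree edge carrying at most $\tfrac12\Vert d\Vert_1$ by its Lemma on balanced vectors, with support $r(G)-1$), and invokes the unit operator norm of the orthogonal projection to get $\Vert f\Vert_2\le\sqrt{B_{\max}}\Vert\tilde d\Vert_2$; you instead bound the Dirichlet energy by $\bigl(\tfrac12\Vert\tilde d^i\Vert_1\bigr)^2 R_{\max}$ with $R_{\max}\le(r(G)-1)/B_{\min}$ via a shortest path, then use $(f_k^*)^2/B_k\le\mathcal E$. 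These are the same inequality in two costumes — both are Thomson's principle with a path/tree routing as competitor — and they produce the identical constant; your version buys a shorter, more conceptual derivation at the cost of quoting effective-resistance facts, while the paper's is self-contained linear algebra. Your framing via the global potential flow also makes the step ``restriction of the DC flow to a block is the block's own DC flow'' explicit, which the paper handles implicitly by constructing the flow blockwise and gluing phase angles. Two small points to tighten if you write this up: justify that each induced subgraph $(V^i,E(V^i))$ is connected by an edge count (a connected contraction of $G$ onto the components of the blocks has $m-1$ edges, forcing one component per block) rather than the informal ``a path closes into a cycle''; and note that the absence of parallel crossing edges between two blocks (needed for the forced-flow step) is exactly the statement that the contracted graph is a simple tree.
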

Essentially, this means that, in uncongested networks, network flow is a good approximation for DCOPF when we consider the space of feasible injections.
More precisely:
\begin{corollary}\label{corollary:approximation-works}
	Consider the following problem:
	\begin{equation}\label{eq:dcopf}
	\begin{aligned}
	z^* = \min_{l, f, p, \theta} \quad &\sum_{b \in \mathcal B} l_b \\
	\text{s.t.} \quad & \sum_{k \in \mathcal K^+(b)} f_k - \sum_{k \in \mathcal K^-(b)} f_k + \sum_{g \in \mathcal G_b} p_g + l_b = D_b & \forall b \in \mathcal B \\
	&f_k = B_k(\theta_{o(k)} - \theta_{d(k)}) & \forall k \in \mathcal K \\
	&-\overline F_k \leq f_k \leq \overline F_k & \forall k \in \mathcal K \\
	&0 \leq l_b \leq D_b & \forall b \in \mathcal B \\
	& 0 \leq p_g  \leq  \overline P_g & \forall g \in \mathcal G \\
	\end{aligned}	
	\end{equation}
	and its relaxation
	\begin{equation}\label{eq:nf-relaxation}
	\begin{aligned}
	z^l = \min_{l, f, p, \theta} \quad &\sum_{b \in \mathcal B} l_b \\
	\text{s.t.} \quad & \sum_{k \in \mathcal K^+(b)} f_k - \sum_{k \in \mathcal K^-(b)} f_k + \sum_{g \in \mathcal G_b} p_g + l_b = D_b & \forall b \in \mathcal B \\
	&-\overline F_k \leq f_k \leq \overline F_k & \forall k \in \mathcal K \\
	&0 \leq l_b \leq D_b & \forall b \in \mathcal B \\
	& 0 \leq p_g  \leq  \overline P_g & \forall g \in \mathcal G
	\end{aligned}	
	\end{equation}
	Let $G(\mathcal B, \mathcal K)$ be the network digraph, $r(G)$ be as defined in Theorem \ref{thm:thermal-limit-bound}, and $D \in \R^{|\mathcal B|}$ be the vector of values $D_b$.
	Then if f $\overline F_k \geq \sqrt{\frac{B_{\max}}{B_{\min}}}\sqrt{r(G)-1}\Vert D\Vert _1$, we have $z^* = z^l$.
\end{corollary}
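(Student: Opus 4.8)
The plan is to use that (\ref{eq:nf-relaxation}) is a relaxation of (\ref{eq:dcopf}), which gives $z^l \le z^*$ for free, and then to prove the reverse inequality $z^* \le z^l$ by taking an optimal solution of the network-flow relaxation, reading off its injection vector, and applying Theorem~\ref{thm:thermal-limit-bound} to promote that injection vector to a \emph{DCOPF}-feasible one realizing the same load shed. Since the load shed and generator dispatch are untouched by this promotion, the lifted point is feasible for (\ref{eq:dcopf}) with objective $z^l$, forcing $z^* \le z^l$.

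Concretely, first I would fix an optimal solution $(l^*, f^*, p^*)$ of (\ref{eq:nf-relaxation}) with $\sum_{b} l^*_b = z^l$ and define $d_b := \sum_{g \in \mathcal G_b} p^*_g + l^*_b - D_b$ for each $b \in \mathcal B$, matching the footnote's notion of injection. Summing the power-balance constraints over all buses, the line-flow terms cancel in pairs, so $\sum_b d_b = 0$; and $f^*$ itself certifies that $d$ is flow-polytope feasible, since it satisfies flow conservation for these injections together with $|f^*_k| \le \overline F_k$ for all $k$. (If a bus carries several generators, I would first apply the reduction described at the start of the section to the equivalent one-generator-per-bus network, which changes neither the injections, nor the load shed, nor the graph $G$.)

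Next I would bound $\Vert d \Vert_1$. Because $\sum_b d_b = 0$, we have $\Vert d \Vert_1 = 2\sum_{b : d_b > 0} d_b$. For any bus with $d_b > 0$, the bound $0 \le l^*_b \le D_b$ yields $d_b \le \sum_{g \in \mathcal G_b} p^*_g$, so $\sum_{b : d_b > 0} d_b \le \sum_{b} \sum_{g \in \mathcal G_b} p^*_g = \sum_b (D_b - l^*_b) \le \Vert D \Vert_1$, where the middle equality is again the summed balance constraint. Hence $\Vert d \Vert_1 \le 2\Vert D \Vert_1$, and therefore
\[
\sqrt{\tfrac{B_{\max}}{B_{\min}}}\,\frac{\sqrt{r(G)-1}}{2}\,\Vert d \Vert_1 \;\le\; \sqrt{\tfrac{B_{\max}}{B_{\min}}}\,\sqrt{r(G)-1}\,\Vert D \Vert_1 \;\le\; \overline F_k \qquad \forall k \in \cup_{i=1}^m E(V^i),
\]
using the hypothesis of the corollary. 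Thus the hypothesis (\ref{eq:the-bound}) of Theorem~\ref{thm:thermal-limit-bound} holds for $d$, so $d$ is DCOPF feasible: there exist $\hat f$ and $\hat\theta$ with flow conservation for the injections $d$, $|\hat f_k| \le \overline F_k$, and $\hat f_k = B_k(\hat\theta_{o(k)} - \hat\theta_{d(k)})$. Finally I would verify that $(l^*, \hat f, p^*, \hat\theta)$ is feasible for (\ref{eq:dcopf}) --- flow conservation for $d$ is exactly the power-balance constraint once the definition of $d_b$ is substituted, Ohm's law and the thermal limits hold by construction, and the box constraints on $l^*, p^*$ are inherited from (\ref{eq:nf-relaxation}) --- so $z^* \le \sum_b l^*_b = z^l$, and with $z^l \le z^*$ we conclude $z^* = z^l$.

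The only genuinely delicate point is the constant: one has to line up the factor $\tfrac12$ appearing in (\ref{eq:the-bound}) against the bound $\Vert d \Vert_1 \le 2\Vert D \Vert_1$ so that the corollary's hypothesis $\overline F_k \ge \sqrt{B_{\max}/B_{\min}}\,\sqrt{r(G)-1}\,\Vert D \Vert_1$ is precisely what is needed to invoke Theorem~\ref{thm:thermal-limit-bound}; after that the proof is just bookkeeping with the balance equations. A secondary item to be careful about is the orientation convention of the incidence matrix $N$, so that "flow into bus $b$" in the balance constraint of (\ref{eq:dcopf})--(\ref{eq:nf-relaxation}) corresponds to the rows of $N$ used in the definition of flow-polytope/DCOPF feasibility, but this is purely notational and does not affect the argument.
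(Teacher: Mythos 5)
Your proposal is correct and follows essentially the same route as the paper's proof: both reduce to showing $z^*\le z^l$ by taking an optimal point of (\ref{eq:nf-relaxation}), bounding the $\ell_1$-norm of the resulting injection vector by $2\Vert D\Vert_1$ (you via the sign-split identity for balanced vectors, the paper via the triangle inequality plus global balance --- an equivalent computation), and then invoking Theorem~\ref{thm:thermal-limit-bound} to lift the injections to a DCOPF-feasible flow with the same load shed. The factor-of-two bookkeeping you flag as the delicate point is exactly how the paper closes the argument as well.
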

We provide a proof of this result in Appendix \ref{sec:proofs}, section \ref{sec:proof-approximation-works}.
It follows that, when the thermal limits respect the bound given in Corollary \ref{corollary:approximation-works} and we disregard phase angle bounds, the network flow relaxation of the worst-case relay attack problem is tight. 

\subsection{Tightness of the Bound from Theorem \ref{thm:thermal-limit-bound}}

As we will show through our empirical study in Section \ref{sec:computational-results}, we believe that in practice, the bound on the thermal limits in Theorem \ref{thm:thermal-limit-bound} is quite conservative.
That is, even for thermal limit values much smaller than the bound given in Theorem \ref{thm:thermal-limit-bound}, NFLB is the same optimal value as the original problem. 
However, we can show that for artificial instances the result is tight within a constant:
\begin{proposition}\label{prop:tight-within-constant}
	There exists a constant $c$ and a family of digraphs $\{G^n(V^n, E^n)\}_{n \in \N}$ where all susceptances are equal to 1 with corresponding injections $d^n \in \R^{|V^n|}$ and $\lim_{n \to \infty} \Vert d^n\Vert _1 = \infty$ such that if
	\[
	\overline F_e < c\sqrt{\frac{B_{\max}}{B_{\min}}} \frac{\sqrt{r(G)-1}}{2}\Vert d^n\Vert _1
	\]
	for all non-cut-arcs $e$ then $d^n$ is a flow-polytope feasible injection but not a DCOPF feasible injection.
\end{proposition}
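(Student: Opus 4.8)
The plan is to realize the separation on a single, fixed cycle while letting only the magnitude of the injection grow; this already suffices and keeps $r(G^n)$ equal to a constant. Fix an integer $L \ge 3$. For every $n$ let $G^n$ be the cycle $C_L$ with all susceptances equal to $1$ (so that $\sqrt{B_{\max}/B_{\min}} = 1$), label its nodes $1, 2, \dots, L$ cyclically, and take the injection $d^n = (n, -n, 0, \dots, 0)$, so that $\sum_b d^n_b = 0$ and $\|d^n\|_1 = 2n \to \infty$. Since a cycle has no bridge, its only node partition whose contraction is a tree is the trivial one ($m = 1$); hence $r(G^n) = L$ and every edge is a non-tree edge. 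I would then take the common thermal limit $\overline F_e := n/2$ on every edge. This lies strictly below the threshold in the statement, namely $c\,\tfrac{\sqrt{r(G^n)-1}}{2}\,\|d^n\|_1 = c\sqrt{L-1}\,n$, exactly when $c > \tfrac{1}{2\sqrt{L-1}}$, so I would fix $c := 1/\sqrt{L-1}$ (for $L = 3$ this is $c = 1/\sqrt2$).

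It then remains to verify the two feasibility claims, each of which is a short computation using that the cycle space of $C_L$ is one-dimensional. Writing flows in the cyclic orientation, every flow realizing $d^n$ has the form $f = (n + t,\, t,\, \dots,\, t)$ for a free circulation parameter $t \in \R$: the edge $(1,2)$ carries $n+t$ and every other edge carries $t$. Choosing $t = -n/2$ yields $|f_e| = n/2$ for all $e$, so $d^n$ is flow-polytope feasible with $\overline F_e = n/2$. For DCOPF, Ohm's law together with Kirchhoff's voltage law force $\sum_e f_e = 0$ around the cycle, which pins $t = -n/L$; hence in the unique DCOPF flow the edge $(1,2)$ carries $\tfrac{L-1}{L}n$, and since $L \ge 3$ this strictly exceeds $n/2 = \overline F_e$. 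So no DCOPF flow respects the thermal limits and $d^n$ is not DCOPF feasible, which completes the argument.

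Two points should be stated explicitly. First, the claim is to be understood as the existence of a thermal-limit assignment below the threshold witnessing the separation — here $\overline F_e = n/2$, which lies in the window $[\,n/2,\; c\sqrt{L-1}\,n\,)$, nonempty precisely by the choice of $c$; one cannot let the limits shrink to $0$, since then even flow-polytope feasibility would fail. Second, and this is what dictates the shape of the construction, one checks easily that the DCOPF flow on any edge never exceeds $\tfrac12\|d\|_1$: decompose $d$ into source--sink pairs of total mass $\tfrac12\|d\|_1$, and observe that for a single unit pair the electrical flow runs strictly downhill in potential, hence is acyclic, hence decomposes into source--sink paths of total weight $1$, so each edge carries at most $1$. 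Thus the factor $\sqrt{r(G)-1}$ in Theorem~\ref{thm:thermal-limit-bound} can never be forced on its own; what Proposition~\ref{prop:tight-within-constant} really pins down is the constant $\tfrac12$, and for that a bounded cycle carrying a concentrated injection of growing magnitude is the natural witness. I expect the only genuine obstacle to be recognizing this — that one should \emph{not} attempt to make $r(G^n)$ grow — together with fixing the precise reading of the quantifiers; once that is settled the verification is routine.
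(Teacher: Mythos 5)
Your proof is correct, and it isolates exactly the mechanism the paper relies on, but with a simpler witness. The paper builds a growing chain of $n$ triangles with unit injections at every third node and a sink of $-n$ at the end, so that $n$ units are forced through the final triangle; there, the two parallel paths (lengths $1$ and $2$) each have capacity $n/2$, and the paper derives a contradiction by noting that saturating the two-edge path forces a phase-angle difference of $n$ across the direct edge. Your construction replaces the whole chain by a single fixed cycle $C_L$ carrying one concentrated source--sink pair of growing magnitude, and instead of arguing by contradiction you compute the unique DCOPF flow from the one-dimensional cycle space ($t=-n/L$, so the short edge carries $\tfrac{L-1}{L}n>\tfrac n2$), which is cleaner; for $L=3$ your cycle is literally the paper's last triangle. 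Both proofs read the proposition the same charitable way (exhibiting one thermal-limit assignment in the admissible window rather than quantifying over all of them), and both keep $r(G^n)$ constant, differing only in the constant $c$ obtained. Your closing observation --- that by superposition over source--sink pairs the DCOPF flow on any edge is at most $\tfrac12\Vert d\Vert_1$, so the $\sqrt{r(G)-1}$ factor in Theorem~\ref{thm:thermal-limit-bound} is never actually attainable and the proposition only certifies the constant $\tfrac12$ --- goes beyond the paper's proof and, if correct as stated, would in fact sharpen Theorem~\ref{thm:thermal-limit-bound} itself; it is worth recording separately.
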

A formal proof is provided in Appendix \ref{sec:proofs}, section \ref{sec:proof-tight-within-constant}, but an example of such a digraph is shown in Figure \ref{artifical-graph-n3}.
In essence, we can construct a digraph where the thermal limits satisfy the requirement of the theorem, but the triangles prevent there existing phase angles such that the lines can be used at capacity.
In practice, such an instance would be surprising, since most arcs in real networks are non-cut-arcs in order to prevent a single point of failure.

\begin{figure}
	\includegraphics[width=0.45\linewidth]{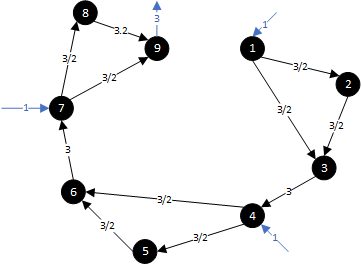}
	\hspace{0.25cm}
	\includegraphics[width=0.45\linewidth]{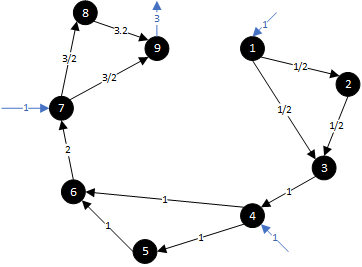}
	\caption{Example of artifical digraph used in the proof of Proposition \ref{prop:tight-within-constant} with $n = 3$.
		Black nodes and arcs represent network structure and blue arcs represent injections.
		The arcs are labeled with the thermal limits in the left-hand image and with flow-polytope feasible flows in the right-hand image.}
	\label{artifical-graph-n3}
\end{figure}

\section{Computational Results}\label{sec:computational-results}
\paragraph{The current state-of-the-art.} As discussed in the Introduction, the current state-of-the-art to obtain a lower bound for (\ref{relay-attack-prob}) is to solve single-level reformulations of (\ref{relay-attack-prob}). We present single-level reformulations of (\ref{relay-attack-prob}) in Appendix \ref{sec:single-level-reformulations}.
In the first formulation, problem (\ref{relay-attack-single-level-logical}), we use logical constraints to give an exact reformulation of (\ref{relay-attack-prob}) because we do not specify upper bounds on the dual variables.
It is possible to express this model using Gurobi's IndicatorConstraints.
While this problem is exact, we will show later in this section that the computational time to solve it is prohibitively large. 
We therefore also consider a mixed integer linear programming reformulation of (\ref{relay-attack-prob}), given in (\ref{relay-attack-single-level-linear}), in which upper bounds on the dual variables ($M$) are used to linearize the implications in (\ref{relay-attack-single-level-logical}). Again, as discussed before, we typically do not have good knowledge of these bounds $M$, and thus solving (\ref{relay-attack-single-level-logical}) with a heuristic value of $M$ leads to a lower bound.

\paragraph{Our Goal.} Since the network flow restriction also provides a lower bound for the worst-case relay attack problem (i.e. NFLB), we seek to answer two questions in this section:
\begin{enumerate}
	\item \emph{Computational Tractability}: What is the computational advantage of solving the network flow restriction over approaches that solve the single-level MILP obtained using arbitrary/heuristic bounds on the dual variables? \label{timeQ}
	\item \emph{Quality:} What is the quality of the network flow restriction solutions, i.e., that of NFLB? \label{qualityQ}
\end{enumerate}

\paragraph{Question 1.} In order to address Question~\ref{timeQ}, we compare the time it takes to obtain NFLB with the time it takes to find a solution with single-level reformulations of (\ref{relay-attack-prob}). In particular, after using Algorithm~\ref{algo:1}, we try to get a sense of the time it takes to find an equivalent-quality solution using the current state-of-the-art.  Therefore we proceed as follows:
\begin{itemize}
	\item We run Algorithm~\ref{algo:1}, which involves solving (\ref{network-flow-single-level-linear}) followed by solving an instance of DCOPF. We record the time needed to run Algorithm~\ref{algo:1}.
	\item We have two choices for single-level reformulations of (\ref{relay-attack-prob}): 
	\begin{itemize}
		\item Solve (\ref{relay-attack-single-level-logical}): We cut off the run when the lower bound is equal to the NFLB or terminate after 4 hours if we did not achieve the NFLB before then. 
		Unfortunately, while this method is the most attractive theoretically since problem (\ref{relay-attack-single-level-logical}) is an exact reformulation, we found that, even on the smallest test case, that it is much slower to solve than (\ref{relay-attack-single-level-linear}), taking over 4 hours to prove optimality on a 118-bus case with a 5\% attack budget.
		For this reason, we only report results with this model where we cut off at NFLB.
		\item Solve (\ref{relay-attack-single-level-linear}): We first need to decide $M$ values. 
		We set the value of $M$ in (\ref{relay-attack-single-level-linear}) to be the ceiling of the largest dual variable of the linear program solved in line \ref{second-step} of Algorithm~\ref{algo:1}. 
		For all of our test networks, applying the above procedure, we found $M = 2$ or $M = 3$. 
		It is possible we have discarded better solutions with this choice of $M$, but we at least ensure that we do not cut off the solution we have already found. 
		In addition, in experiments not reported here, we attempted larger values of $M$ for the smaller cases and found that the solution time scales badly with increases in $M$.
		In addition, we still did not find better solutions than the one corresponding to our chosen value of $M$.
		Again, we cut off the run of solving (\ref{relay-attack-single-level-linear}) when the lower bound is equal to the NFLB or terminate after 4 hours if we did not achieve the NFLB before then. We use Gurobi to solve (\ref{relay-attack-single-level-linear}), setting Gurobi's MIPFocus parameter to 1 to prioritize finding good quality solutions.
	\end{itemize}
\end{itemize}

\paragraph{Question 2.} In order to answer Question~\ref{qualityQ}, ideally, we should compare NFLB to the optimal solution of  (\ref{relay-attack-prob}).  
This is difficult to answer since we do not know of a non-trivial upper bound for the worst-case relay attack problem. In theory, we could accomplish this by solving (\ref{relay-attack-single-level-logical}). 
However, as mentioned earlier, we found that solving (\ref{relay-attack-single-level-logical}) using Gurobi's IndicatorConstraints does not scale well enough to beat NFLB.  
We therefore approach this question by comparing to the best lower bound obtained from the single-level reformulation of (\ref{relay-attack-single-level-linear}) with the heuristic choice of $M$ described above, warmstarted with the solution we found in Algorithm~\ref{algo:1}, and given a time limit of 4 hours.

\paragraph{Software and Hardware Specification.} Our models are implemented in Pyomo (\cite{pyomo-book, pyomo-paper}) using Gurobi 9.0.2 as the solver (\cite{gurobi}).
The experiments are run giving Gurobi 8 threads on a server with 40 Intel Xeon 2.20GHz CPUs and 251GB of RAM.

\subsection{Test Networks}

We present results on 16 different networks ranging from 118 buses to 6468 buses and with varying levels of congestion.
Details of the networks are given in Table \ref{table:test-instances}.
\begin{table}
	\centering
	\footnotesize
	\caption{Details of test instances used in the computational study. The last two columns give insight into congestion. We solve DCOPF with no attack and give the percentage of lines operating at their limits in the `Percentage of Thermal Limits Tight' column and the percentage of buses with a phase angle at its bound in the `Percentage of Phase Angle Bounds Tight' column. \label{table:test-instances}}
	\begin{tabular}{l | r r r r r r}
		\hline 
		Instance & \makecell{Number of \\Buses} & \makecell{Number of \\Lines} & \makecell{Number of \\Generators} & \makecell{Percentage of \\Thermal Limits Tight} & \makecell{Percentage of Phase \\Angle Bounds Tight} \\
		\hline
		118Blumsack & 118 & 186 & 19 &  0.54\% &  1.70\% \\
		300Kocuk &  300 & 411 & 61 &  2.92\% &  0.67\% \\
		500tamu & 500 & 597 & 90 &  0.00\% &  1.40\% \\
		1354pegase & 1354 & 1991 & 260 & 1.21\% & 0.74\% \\
		1354pegase\_api &  1354 &  1991 &  260 & 0.55\% & 1.11\% \\
		1354pegase\_sad &  1354 &  1991 &  260 & 1.31\% & 0.59\% \\
		1888rte & 1888 & 2531 & 297 & 0.91\% & 0.05\% \\
		 1888rte\_api &  1888 & 2531 & 297 & 2.96\% &  0.05\% \\
		 1888rte\_sad &  1888 & 2531 & 297 & 1.07\% & 0.05\% \\
		1951rte & 1951 & 2596 & 391 & 0.23\% & 0.05\% \\
		1951rte\_api & 1951 & 2596 & 391 & 4.08\% & 0.05\% \\
		1951rte\_sad & 1951 & 2596 & 391 & 0.58\% & 0.05\% \\
		2848rte & 2848 & 3776 & 547 &  0.66\% &  0.04\% \\
		3012wp & 3012 & 3572 & 502 &  0.03\% &  0.03\% \\
		3375wp & 3374 & 4161 & 596 &  0.26\% &  0.03\% \\
		6468rte & 6468 & 9000 & 1295 &  0.11\% &  0.02\% \\
		\hline
	\end{tabular}
\end{table}
Note that 118Blumsack is the IEEE 118 bus network as modified in \cite{BlumsackLI2007}.
This is a very congested network, which we use intentionally since congestion can break down the assumptions on dual bounds used in prior work and also renders our theoretical guarantees moot.
The 300Kocuk case is the IEEE 300 bus case, as modified in \cite{KocukJDLLS2016}.
It also has been modified to be more congested than the original.
The other cases are used as they are presented in \cite{pglib}.
Note that the cases with names ending in `\_api' and `\_sad' are congested modifications of the instance which shares the prefix of their name.

Since these test networks do not include any information about the control systems, for the sake of demonstration, we assume that there is one relay per bus which controls that bus, all the generators at that bus, and all the lines adjacent to the bus. 
For each of the test networks, we find a solution for the worst-case relay attack problem for attacker budgets of 1\%, 3\%, 5\%, 7\%, 10\%, 13\%, 15\%, 20\%, 25\%, and 30\% of the relays in the grid.

\subsection{Network Flow Restriction Results}

\paragraph{Difficulty in solving instance as a function of budget.} 
Both very small budgets and very large budgets turn out to be easier problems for Gurobi.
As is also observed in \cite{BienstockV2010}, we typically see that the computational time is longest for mid-range budgets.
This is intuitive since for small budgets there are fewer possible attacks, and for larger budgets, the attacker is able to shed all the load in the system, so the trade-offs are no longer interesting.
This concept is illustrated for a couple of the test networks in Figure \ref{fig:difficulty-and-saturation}.
In Figures \ref{sfig:118-bound} and \ref{sfig:1951-bound}, we see that as the attack budget increases, the amount of load shed achievable by the attacker increases and eventually saturates at the total load in the system.
Note that these plot the lower bound achieved at the 4-hour time limit, explaining why (\ref{relay-attack-single-level-logical}) can have a lower objective value than the other models.
In Figure \ref{sfig:118-comp-times}, we see that the most difficult problems computationally are at the elbow of the curve in Figure \ref{sfig:118-bound}.
Similarly in Figure \ref{sfig:1951-comp-times}, for a larger case, all the small budgets are difficult for Gurobi, but the problem becomes trivial after passing the saturation point.

\begin{figure}
	\centering
	\subfloat[Lower bounds for 118 Bus Case.]{
		\includegraphics[width=0.5\linewidth]{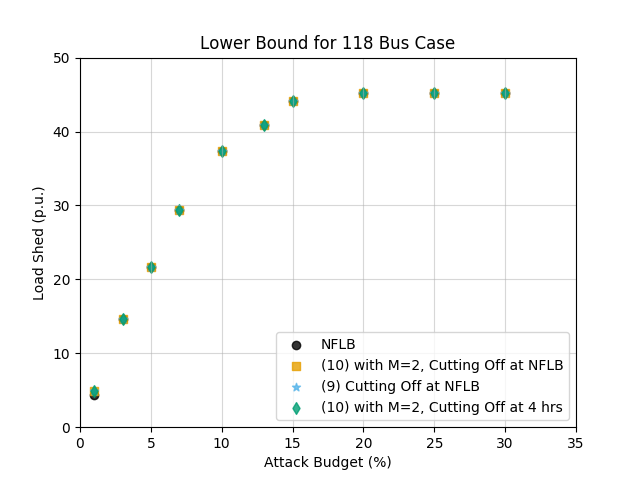}
		\label{sfig:118-bound}
	}
	\subfloat[Computational times for 118 Bus Case.]{
		\includegraphics[width=0.5\linewidth]{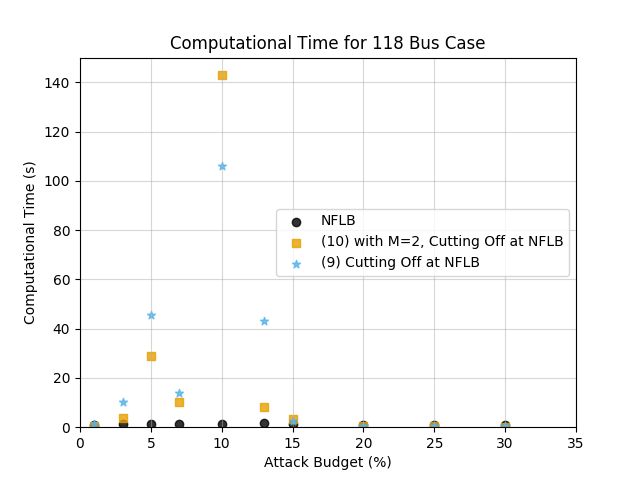}
		\label{sfig:118-comp-times}
	}
	
	\subfloat[Lower bounds for 1951 Bus Case.]{
		\includegraphics[width=0.5\linewidth]{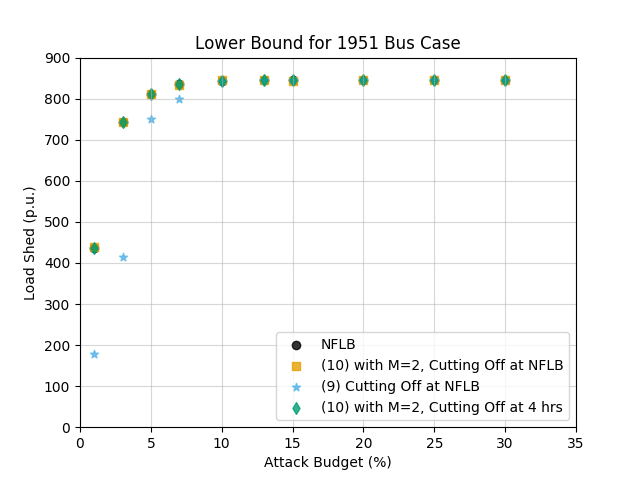}
		\label{sfig:1951-bound}
	}
	\subfloat[Computational times for 1951 Bus Case.]{
		\includegraphics[width=0.5\linewidth]{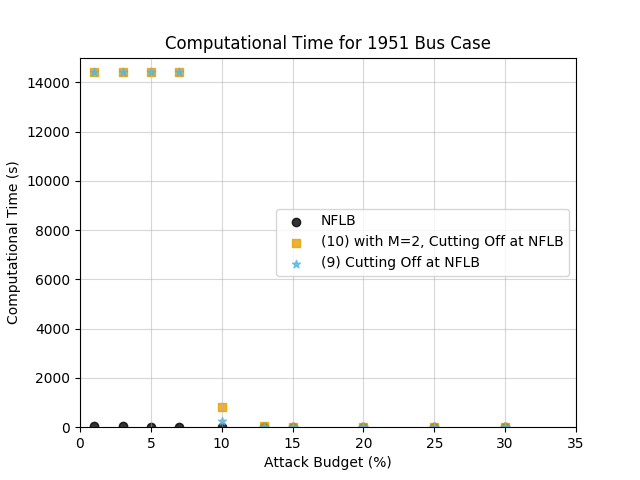}
		\label{sfig:1951-comp-times}
	}
	\caption{Computational times and bounds for different attack budgets on the 118 and 1951 bus cases. NFLB is obtained by running Algorithm~\ref{algo:1} using Gurobi. The other bounds are obtained using Gurobi to solve (\ref{relay-attack-single-level-logical}) and (\ref{relay-attack-single-level-linear}) terminating either when the bound reaches NFLB or after 4 hours. The numbers visualized in plots \ref{sfig:118-comp-times} and \ref{sfig:1951-comp-times} are given in the ``NFLB Time," ``Problem (\ref{relay-attack-single-level-linear}) Time to NFLB", and ``Problem (\ref{relay-attack-single-level-logical}) Time to NFLB" columns of Tables \ref{table:1} and \ref{table:2}.} \label{fig:difficulty-and-saturation}
\end{figure}
\paragraph{Results.}
The results from the network flow restriction and the experiments on problems (\ref{relay-attack-single-level-logical}) and (\ref{relay-attack-single-level-linear}) are shown in Tables \ref{table:1}, \ref{table:2}, \ref{table:3}, and \ref{table:4} for the twelve smaller test instances.
The first column shows the attack budget as a percentage of the relays in the system.
The second column translates this into an integer number of relays which can be attacked, that is, the value of $U$ for that instance.
The third column gives the best known lower bound from among all our experiments. 
This is the highest known load shed the attacker can achieve, given in per unit\footnote{As is typical in power systems modeling, in order to have better-scaled models, we quantify power ``per unit," that is, in 100 MW units.}.
In the ``NFLB" columns, ``Quality" is the load shed from the attack found by Algorithm~\ref{algo:1} as a percentage of the best known lower bound.
The time in seconds that it takes to run Algorithm~\ref{algo:1} is reported in the ``Time" column.
In the next three columns, we report results related to Question 2 above, that is, determining the quality of NFLB.
Recall that, in this experiment, we solve (\ref{relay-attack-single-level-linear}) using the heuristic $M$, warmstarting with the solution corresponding to NFLB, and allowing Gurobi a 4-hour time limit. 
The ``Problem (\ref{relay-attack-single-level-linear}) Quality" column gives the load shed this experiment achieved as a percentage of the best known lower bound.
The ``Problem (\ref{relay-attack-single-level-linear}) Time" column gives the time for the Gurobi solve.
The ``Problem (\ref{relay-attack-single-level-linear}) Gap" column reports the gap after the 4 hours.
Note that this is not a gap with a valid upper bound for the worst-case relay attack problem, but is instead a measure of how close Gurobi was to proving optimality on the particular restriction it was solving, in this case with the dual variables bounded by 2 for all but the 1888rte\_api and 1951rte\_api cases, where the dual variables are bounded by 3.
That is, Gurobi's upper bound is a bound on the best feasible solution achievable with this restriction.
In the last four columns, we report results related to Question 1 from the beginning of this section, in which we compare to solving the worst-case relay attack problem using formulations from prior literature.
In these experiments, we do not warmstart the Gurobi solves, and we cut off the solve when Gurobi achieves NFLB, if that is before the time limit of 4 hours.
We report the load shed achieved as a percentage of the best known lower bound as well as the time 
it takes Gurobi to find a solution whose objective value is as good as NFLB when solving (\ref{relay-attack-single-level-linear}) and (\ref{relay-attack-single-level-logical}) respectively.
As mentioned previously, we do not report results where we continue solving (\ref{relay-attack-single-level-logical}) after it achieves NFLB because we found it slow to find a solution as good as that obtained by Algorithm~\ref{algo:1}, even for the smaller test cases.

Gurobi hits the 4-hour time limit consistently for the more difficult budgets in the larger of these instances (i.e., Gurobi does not reach the NFLB within 4 hours).
Therefore we did not compare with solving either (\ref{relay-attack-single-level-logical}) or (\ref{relay-attack-single-level-linear}) for the four largest instances, and instead report the results of just the network flow restriction in Table \ref{table:large-nf}.
Column ``NFLB" gives the load shed from the attack found by Algorithm~\ref{algo:1} in per unit, and ``NFLB Time" gives the time taken to run Algorithm~\ref{algo:1}.

\begin{sidewaystable}
	\footnotesize
	\caption{Network flow restriction results on the six smallest cases, part 1. For each instance, we show results for 10 different budgets for the percentage of relays that can be attacked. The best known achievable load shed is in the ``Best Known LB" column. In the following two columns we give NFLB as a percentage of the best known solution for the instance and the computational time for Algorithm~\ref{algo:1}. The next three columns show the load shed attained by the solution we get from running (\ref{relay-attack-single-level-linear}) for up to 4 hours, the running time, and Gurobi's optimality gap at termination. The last four columns show the quality of the solution achieved and the times for Gurobi to achieve NFLB when solving problems (\ref{relay-attack-single-level-linear}) and (\ref{relay-attack-single-level-logical}) respectively. \label{table:1}}
	\begin{tabular}{c | r r r | r r | r r r | r r r r }
			\hline 
			\multirow{2}{1.6cm}{Instance} & \multirow{2}{1.3cm}{\makecell{Budget \\ (\%)}} & \multirow{2}{1.3cm}{\makecell{\# of \\ Relays}} & \multirow{2}{1.3cm}{\makecell{Best \\ Known \\ LB}} & \multicolumn{2}{c}{{\bf NFLB}} & \multicolumn{3}{c}{\bf{Question 2}} & \multicolumn{4}{c}{{\bf Question 1}} \\
			& & & & \makecell{Quality} & \makecell{Time \\ (s)} & \makecell{Problem \\ (\ref{relay-attack-single-level-linear}) \\ Quality } & \makecell{Problem \\ (\ref{relay-attack-single-level-linear}) \\ Time (s)} & \makecell{Problem \\ (\ref{relay-attack-single-level-linear}) Gap } &
			\makecell{Problem \\ (\ref{relay-attack-single-level-linear}) \\ Quality} & \makecell{Problem  \\ (\ref{relay-attack-single-level-linear}) Time \\ to NFLB (s)} & \makecell{Problem \\ (\ref{relay-attack-single-level-logical}) \\ Quality} & \makecell{Problem  \\ (\ref{relay-attack-single-level-logical}) Time \\ to NFLB (s)} \\
			\hline
			\multirow{10}{1.9cm}{118Blumsack}& 1 & 1 & 4.93 & 89.25\% & 1.02 & 100.00\% & 1.30 & 0.00\% & 100.00\% & 0.32 & 100.00\% & 1.17 \\
			& 3 & 4 & 14.63 & 100.00\% & 1.14 & 100.00\% & 64.97 & 0.00\% & 100.00\% & 3.70 & 100.00\% & 10.14 \\
			& 5 & 6 & 21.72 & 100.00\% & 1.42 & 100.00\% & 720.08 & 0.00\% & 100.00\% & 28.83 & 100.00\% & 45.58 \\
			& 7 & 8 & 29.38 & 100.00\% & 1.12 & 100.00\% & 387.48 & 0.00\% & 100.00\% & 10.24 & 100.00\% & 14.00 \\
			& 10 & 12 & 37.31 & 100.00\% & 1.25 & 100.00\% & 1294.79 & 0.00\% & 100.00\% & 142.95 & 100.00\% & 105.88 \\
			& 13 & 15 & 40.93 & 100.00\% & 1.71 & 100.00\% & 911.24 & 0.00\% & 100.00\% & 8.04 & 100.00\% & 43.11 \\
			& 15 & 18 & 44.19 & 100.00\% & 1.16 & 100.00\% & 102.94 & 0.00\% & 100.00\% & 3.27 & 100.00\% & 2.53 \\
			& 20 & 24 & 45.19 & 100.00\% & 0.80 & 100.00\% & 0.07 & 0.00\% & 100.00\% & 0.33 & 100.00\% & 0.35 \\
			& 25 & 30 & 45.19 & 100.00\% & 0.84 & 100.00\% & 0.06 & 0.00\% & 100.00\% & 0.29 & 100.00\% & 0.31 \\
			& 30 & 35 & 45.19 & 100.00\% & 0.80 & 100.00\% & 0.07 & 0.00\% & 100.00\% & 0.12 & 100.00\% & 0.29 \\
			\hline
			\multirow{10}{1.9cm}{300Kocuk}& 1 & 3 & 52.48 & 91.73\% & 2.26 & 100.00\% & 14.83 & 0.00\% & 100.00\% & 5.35 & 95.62\% & 15.98 \\
			& 3 & 9 & 99.55 & 99.77\% & 3.71 & 100.00\% & 14400.01 & 22.75\% & 100.00\% & 99.53 & 100.00\% & 3065.81 \\
			& 5 & 15 & 130.24 & 100.00\% & 6.47 & 100.00\% & 14400.02 & 27.94\% & 100.00\% & 385.57 & 89.58\% & 14400.00 \\
			& 7 & 21 & 152.01 & 100.00\% & 8.84 & 100.00\% & 14400.01 & 28.92\% & 100.00\% & 538.82 & 97.95\% & 14400.01 \\
			& 10 & 30 & 184.07 & 100.00\% & 5.90 & 100.00\% & 14400.01 & 19.99\% & 100.00\% & 199.14 & 98.35\% & 14400.00 \\
			& 13 & 39 & 211.37 & 100.00\% & 3.62 & 100.00\% & 14400.01 & 9.16\% & 100.00\% & 355.72 & 99.71\% & 14400.01 \\
			& 15 & 45 & 224.88 & 100.00\% & 3.18 & 100.00\% & 14400.01 & 4.27\% & 100.00\% & 212.62 & 99.63\% & 14400.00 \\
			& 20 & 60 & 238.48 & 100.00\% & 2.35 & 100.00\% & 0.13 & 0.00\% & 100.00\% & 19.44 & 100.00\% & 4.10 \\
			& 25 & 75 & 238.48 & 100.00\% & 2.07 & 100.00\% & 0.10 & 0.00\% & 100.00\% & 2.32 & 100.00\% & 1.72 \\
			& 30 & 90 & 238.48 & 100.00\% & 1.59 & 100.00\% & 0.10 & 0.00\% & 100.00\% & 1.19 & 100.00\% & 2.74 \\
			\hline
			\multirow{10}{1.9cm}{500tamu}& 1 & 5 & 16.79 & 100.00\% & 13.71 & 100.00\% & 12764.73 & 0.00\% & 100.00\% & 3872.70 & 88.62\% & 14400.01 \\
			& 3 & 15 & 71.88 & 100.00\% & 3.92 & 100.00\% & 14400.02 & 7.42\% & 100.00\% & 44.53 & 100.00\% & 82.53 \\
			& 5 & 25 & 77.26 & 100.00\% & 2.88 & 100.00\% & 14400.02 & 0.32\% & 100.00\% & 44.11 & 100.00\% & 81.06 \\
			& 7 & 35 & 77.51 & 100.00\% & 2.87 & 100.00\% & 0.15 & 0.00\% & 100.00\% & 14.38 & 100.00\% & 20.61 \\
			& 10 & 50 & 77.51 & 100.00\% & 2.17 & 100.00\% & 0.15 & 0.00\% & 100.00\% & 2.82 & 100.00\% & 4.97 \\
			& 13 & 65 & 77.51 & 100.00\% & 2.25 & 100.00\% & 0.15 & 0.00\% & 100.00\% & 2.16 & 100.00\% & 1.73 \\
			& 15 & 75 & 77.51 & 100.00\% & 2.89 & 100.00\% & 0.15 & 0.00\% & 100.00\% & 1.02 & 100.00\% & 1.74 \\
			& 20 & 100 & 77.51 & 100.00\% & 2.08 & 100.00\% & 0.15 & 0.00\% & 100.00\% & 3.09 & 100.00\% & 1.90 \\
			& 25 & 125 & 77.51 & 100.00\% & 2.15 & 100.00\% & 0.15 & 0.00\% & 100.00\% & 1.66 & 100.00\% & 1.59 \\
			& 30 & 150 & 77.51 & 100.00\% & 2.21 & 100.00\% & 0.15 & 0.00\% & 100.00\% & 0.99 & 100.00\% & 1.41 \\
			\hline
	\end{tabular}
\end{sidewaystable}

\begin{sidewaystable}
	\footnotesize
	\caption{Network flow restriction results on the six smallest cases, part 2. For each instance, we show results for 10 different budgets for the percentage of relays that can be attacked. The best known achievable load shed is in the ``Best Known LB" column. In the following two columns we give NFLB as a percentage of the best known solution for the instance and the computational time for Algorithm~\ref{algo:1}. The next three columns show the load shed attained by the solution we get from running (\ref{relay-attack-single-level-linear}) for up to 4 hours, the running time, and Gurobi's optimality gap at termination. The last four columns show the quality of the solution achieved and the times for Gurobi to achieve NFLB when solving problems (\ref{relay-attack-single-level-linear}) and (\ref{relay-attack-single-level-logical}) respectively. Note that, in the Question 1 results, in cases where problem (\ref{relay-attack-single-level-linear}) runs for 4 hours but has a quality of 100.00\%, this is a symptom of rounding: The NFLB is not quite achieved within the time limit, but that is not reflected within the two decimal places in this table.
		Also note that the Question 1 experiment solving problem (\ref{relay-attack-single-level-linear}) occasionally finds the best known solution since it can exceed the NFLB in the iteration before it terminates.\label{table:2}}
	\begin{tabular}{c | r r r | r  r | r r  r | r r r r }
			\hline 
			\multirow{2}{1.5cm}{Instance} & \multirow{2}{1.3cm}{\makecell{Budget \\ (\%)}} & \multirow{2}{1.3cm}{\makecell{\# of \\ Relays}} & \multirow{2}{1.3cm}{\makecell{Best \\ Known \\ LB}} & \multicolumn{2}{c}{{\bf NFLB}} & \multicolumn{3}{c}{\bf{Question 2}} & \multicolumn{4}{c}{{\bf Question 1}} \\
			& & & & \makecell{Quality} & \makecell{Time \\ (s)} & \makecell{Problem \\ (\ref{relay-attack-single-level-linear}) \\ Quality } & \makecell{Problem \\ (\ref{relay-attack-single-level-linear}) \\ Time (s)} & \makecell{Problem \\ (\ref{relay-attack-single-level-linear}) Gap } &
			\makecell{Problem \\ (\ref{relay-attack-single-level-linear}) \\ Quality} & \makecell{Problem  \\ (\ref{relay-attack-single-level-linear}) Time \\ to NFLB (s)} & \makecell{Problem \\ (\ref{relay-attack-single-level-logical}) \\ Quality} & \makecell{Problem  \\ (\ref{relay-attack-single-level-logical}) Time \\ to NFLB (s)} \\
			\hline
			\multirow{10}{1.9cm}{1354pegase}& 1 & 14 & 231.67 & 100.00\% & 139.59 & 100.00\% & 14400.04 & 135.91\% & 97.74\% & 14400.02 & 47.40\% & 14400.01 \\
			& 3 & 41 & 532.21 & 100.00\% & 44.77 & 100.00\% & 14400.03 & 30.74\% & 98.91\% & 14400.05 & 68.91\% & 14400.01 \\
			& 5 & 68 & 653.14 & 100.00\% & 23.55 & 100.00\% & 14400.05 & 9.19\% & 99.18\% & 14400.06 & 93.28\% & 14400.01 \\
			& 7 & 95 & 698.13 & 100.00\% & 18.53 & 100.00\% & 14400.18 & 6.21\% & 99.87\% & 14400.26 & 98.21\% & 14400.00 \\
			& 10 & 135 & 735.70 & 100.00\% & 20.94 & 100.00\% & 14400.02 & 0.78\% & 100.00\% & 14400.03 & 99.69\% & 14400.01 \\
			& 13 & 176 & 741.46 & 100.00\% & 12.55 & 100.00\% & 6.48 & 0.00\% & 100.00\% & 179.52 & 100.00\% & 77.53 \\
			& 15 & 203 & 741.46 & 100.00\% & 9.30 & 100.00\% & 0.45 & 0.00\% & 100.00\% & 85.14 & 100.00\% & 64.13 \\
			& 20 & 271 & 741.46 & 100.00\% & 7.03 & 100.00\% & 0.46 & 0.00\% & 99.99\% & 15.57 & 100.00\% & 12.61 \\
			& 25 & 338 & 741.46 & 100.00\% & 6.23 & 100.00\% & 0.47 & 0.00\% & 100.00\% & 9.65 & 100.00\% & 8.50 \\
			& 30 & 406 & 741.46 & 100.00\% & 6.10 & 100.00\% & 0.47 & 0.00\% & 100.00\% & 5.09 & 100.00\% & 8.91 \\
			\hline
			\multirow{10}{1.9cm}{ 1888rte}& 1 & 19 & 292.08 & 100.00\% & 45.70 & 100.00\% & 14400.03 & 76.57\% & 100.00\% & 8629.13 & 36.04\% & 14400.01 \\
			& 3 & 57 & 525.44 & 100.00\% & 28.92 & 100.00\% & 14400.02 & 8.02\% & 100.00\% & 3790.54 & 85.52\% & 14400.05 \\
			& 5 & 94 & 572.74 & 100.00\% & 27.58 & 100.00\% & 14400.17 & 2.45\% & 99.97\% & 14400.18 & 70.71\% & 14400.01 \\
			& 7 & 132 & 591.76 & 100.00\% & 21.69 & 100.00\% & 14400.06 & 0.73\% & 99.99\% & 14400.07 & 97.80\% & 14400.02 \\
			& 10 & 189 & 596.07 & 100.00\% & 12.60 & 100.00\% & 0.74 & 0.00\% & 100.00\% & 360.94 & 100.00\% & 28.14 \\
			& 13 & 245 & 596.07 & 100.00\% & 9.42 & 100.00\% & 0.77 & 0.00\% & 100.00\% & 43.09 & 100.00\% & 23.44 \\
			& 15 & 283 & 596.07 & 100.00\% & 8.83 & 100.00\% & 0.76 & 0.00\% & 100.00\% & 15.65 & 100.00\% & 16.64 \\
			& 20 & 378 & 596.07 & 100.00\% & 8.38 & 100.00\% & 0.72 & 0.00\% & 100.00\% & 5.70 & 100.00\% & 14.25 \\
			& 25 & 472 & 596.07 & 100.00\% & 8.12 & 100.00\% & 0.74 & 0.00\% & 100.00\% & 14.02 & 99.99\% & 8.88 \\
			& 30 & 566 & 596.07 & 100.00\% & 8.06 & 100.00\% & 0.73 & 0.00\% & 100.00\% & 6.55 & 99.99\% & 9.77 \\
			\hline
			\multirow{10}{1.9cm}{1951rte}& 1 & 20 & 441.07 & 99.06\% & 42.70 & 99.97\% & 14400.02 & 65.13\% & 100.00\% & 14400.03 & 40.21\% & 14400.01 \\
			& 3 & 59 & 743.17 & 100.00\% & 35.71 & 100.00\% & 14400.04 & 9.32\% & 99.89\% & 14400.04 & 55.82\% & 14400.03 \\
			& 5 & 98 & 810.79 & 100.00\% & 26.35 & 100.00\% & 14400.20 & 2.51\% & 100.00\% & 14400.04 & 92.46\% & 14400.01 \\
			& 7 & 137 & 834.51 & 100.00\% & 26.13 & 100.00\% & 14400.02 & 0.97\% & 100.00\% & 14400.06 & 95.85\% & 14400.01 \\
			& 10 & 195 & 844.23 & 100.00\% & 18.37 & 100.00\% & 14.83 & 0.00\% & 100.00\% & 972.18 & 100.00\% & 255.64 \\
			& 13 & 254 & 844.27 & 100.00\% & 13.43 & 100.00\% & 2.51 & 0.00\% & 100.00\% & 47.39 & 100.00\% & 15.35 \\
			& 15 & 293 & 844.27 & 100.00\% & 11.70 & 100.00\% & 0.80 & 0.00\% & 100.00\% & 44.41 & 100.00\% & 14.70 \\
			& 20 & 390 & 844.27 & 100.00\% & 11.21 & 100.00\% & 0.91 & 0.00\% & 100.00\% & 6.63 & 100.00\% & 14.59 \\
			& 25 & 488 & 844.27 & 100.00\% & 11.19 & 100.00\% & 3.26 & 0.00\% & 100.00\% & 10.19 & 100.00\% & 11.55 \\
			& 30 & 585 & 844.27 & 100.00\% & 11.09 & 100.00\% & 2.16 & 0.00\% & 100.00\% & 9.46 & 100.00\% & 12.47 \\
			\hline
\end{tabular}
\end{sidewaystable}

\begin{sidewaystable}
	\footnotesize
	\caption{Network flow restriction results on the `api' congested cases. For each instance, we show results for 10 different budgets for the percentage of relays that can be attacked. The best known achievable load shed is in the ``Best Known LB" column. In the following two columns we give NFLB as a percentage of the best known solution for the instance and the computational time for Algorithm~\ref{algo:1}. The next three columns show the load shed attained by the solution we get from running (\ref{relay-attack-single-level-linear}) for up to 4 hours, the running time, and Gurobi's optimality gap at termination. The last four columns show the quality of the solution achieved and the times for Gurobi to achieve NFLB when solving problems (\ref{relay-attack-single-level-linear}) and (\ref{relay-attack-single-level-logical}) respectively. \label{table:3}}
	\begin{tabular}{c | r r r | r r | r r r | r r r r }
		\hline 
		\multirow{2}{2.6cm}{Instance} & \multirow{2}{0.9cm}{\makecell{Budget \\ (\%)}} & \multirow{2}{0.9cm}{\makecell{\# of \\ Relays}} & \multirow{2}{1.1cm}{\makecell{Best \\ Known \\ LB}} & \multicolumn{2}{c}{{\bf NFLB}} & \multicolumn{3}{c}{\bf{Question 2}} & \multicolumn{4}{c}{{\bf Question 1}} \\
		& & & & \makecell{Quality} & \makecell{Time \\ (s)} & \makecell{Problem \\ (\ref{relay-attack-single-level-linear}) \\ Quality } & \makecell{Problem \\ (\ref{relay-attack-single-level-linear}) \\ Time (s)} & \makecell{Problem \\ (\ref{relay-attack-single-level-linear}) Gap } &
		\makecell{Problem \\ (\ref{relay-attack-single-level-linear}) \\ Quality} & \makecell{Problem  \\ (\ref{relay-attack-single-level-linear}) Time \\ to NFLB (s)} & \makecell{Problem \\ (\ref{relay-attack-single-level-logical}) \\ Quality} & \makecell{Problem  \\ (\ref{relay-attack-single-level-logical}) Time \\ to NFLB (s)} \\
		\hline
		\multirow{10}{1.9cm}{1354pegase\_api}& 1 & 14 & 223.18 & 100.00\% & 60.56 & 100.00\% & 14400.03 & 109.28\% & 100.00\% & 14400.04 & 48.75\% & 14400.01 \\
		& 3 & 41 & 471.12 & 100.00\% & 34.50 & 100.00\% & 14400.05 & 56.92\% & 100.00\% & 14400.58 & 69.15\% & 14400.01 \\
		& 5 & 68 & 631.65 & 99.88\% & 35.31 & 100.00\% & 14400.02 & 23.98\% & 99.83\% & 14400.02 & 89.42\% & 14400.10 \\
		& 7 & 95 & 735.20 & 100.00\% & 29.10 & 100.00\% & 14400.20 & 10.53\% & 99.85\% & 14400.06 & 96.02\% & 14400.01 \\
		& 10 & 135 & 808.01 & 100.00\% & 23.11 & 100.00\% & 14400.04 & 0.57\% & 100.00\% & 14400.04 & 99.44\% & 14400.01 \\
		& 13 & 176 & 812.59 & 100.00\% & 11.79 & 100.00\% & 0.46 & 0.00\% & 100.00\% & 186.47 & 100.00\% & 190.42 \\
		& 15 & 203 & 812.59 & 100.00\% & 12.90 & 100.00\% & 0.47 & 0.00\% & 99.99\% & 105.22 & 100.00\% & 68.38 \\
		& 20 & 271 & 812.59 & 100.00\% & 8.92 & 100.00\% & 0.47 & 0.00\% & 100.00\% & 21.78 & 100.00\% & 9.08 \\
		& 25 & 338 & 812.59 & 100.00\% & 8.42 & 100.00\% & 0.47 & 0.00\% & 100.00\% & 11.50 & 100.00\% & 7.02 \\
		& 30 & 406 & 812.59 & 100.00\% & 8.32 & 100.00\% & 0.47 & 0.00\% & 100.00\% & 5.82 & 100.00\% & 6.98 \\
		\hline
		\multirow{10}{1.9cm}{1888rte\_api}& 1 & 19 & 310.43 & 99.31\% & 106.29 & 100.00\% & 14400.02 & 122.91\% & 96.86\% & 14400.03 & 47.88\% & 14400.02 \\
		& 3 & 57 & 628.63 & 100.00\% & 30.63 & 100.00\% & 14400.05 & 22.79\% & 99.49\% & 14400.01 & 68.16\% & 14400.01 \\
		& 5 & 94 & 755.83 & 100.00\% & 28.23 & 100.00\% & 14400.03 & 4.93\% & 99.47\% & 14400.04 & 93.57\% & 14400.01 \\
		& 7 & 132 & 796.28 & 100.00\% & 23.25 & 100.00\% & 14400.06 & 1.04\% & 99.74\% & 14400.06 & 99.26\% & 14400.02 \\
		& 10 & 189 & 804.53 & 100.00\% & 12.00 & 100.00\% & 0.75 & 0.00\% & 100.00\% & 641.89 & 100.00\% & 31.14 \\
		& 13 & 245 & 804.53 & 100.00\% & 8.73 & 100.00\% & 0.76 & 0.00\% & 100.00\% & 119.60 & 100.00\% & 10.60 \\
		& 15 & 283 & 804.53 & 100.00\% & 8.31 & 100.00\% & 0.74 & 0.00\% & 100.00\% & 83.06 & 100.00\% & 10.24 \\
		& 20 & 378 & 804.53 & 100.00\% & 8.24 & 100.00\% & 0.73 & 0.00\% & 100.00\% & 27.09 & 100.00\% & 9.41 \\
		& 25 & 472 & 804.53 & 100.00\% & 8.30 & 100.00\% & 0.74 & 0.00\% & 100.00\% & 25.81 & 100.00\% & 9.08 \\
		& 30 & 566 & 804.53 & 100.00\% & 8.02 & 100.00\% & 0.73 & 0.00\% & 100.00\% & 5.54 & 100.00\% & 9.62 \\
		\hline
		\multirow{10}{1.9cm}{1951rte\_api}& 1 & 20 & 418.14 & 100.00\% & 109.68 & 100.00\% & 14400.11 & 106.90\% & 98.00\% & 14400.02 & 54.80\% & 14400.02 \\
		& 3 & 59 & 777.21 & 100.00\% & 42.68 & 100.00\% & 14400.04 & 23.11\% & 99.85\% & 14400.02 & 76.25\% & 14400.01 \\
		& 5 & 98 & 935.91 & 100.00\% & 32.71 & 100.00\% & 14400.06 & 5.01\% & 99.71\% & 14400.33 & 90.71\% & 14400.01 \\
		& 7 & 137 & 978.88 & 100.00\% & 28.69 & 100.00\% & 14400.07 & 1.22\% & 99.58\% & 14400.06 & 88.29\% & 14400.02 \\
		& 10 & 195 & 993.09 & 100.00\% & 18.06 & 100.00\% & 17.78 & 0.01\% & 100.00\% & 792.13 & 100.00\% & 228.47 \\
		& 13 & 254 & 993.11 & 100.00\% & 14.00 & 100.00\% & 0.76 & 0.00\% & 100.00\% & 341.08 & 100.00\% & 17.00 \\
		& 15 & 293 & 993.11 & 100.00\% & 13.84 & 100.00\% & 3.26 & 0.00\% & 100.00\% & 108.54 & 100.00\% & 11.90 \\
		& 20 & 390 & 993.11 & 100.00\% & 11.57 & 100.00\% & 1.65 & 0.00\% & 99.99\% & 13.96 & 100.00\% & 11.69 \\
		& 25 & 488 & 993.11 & 100.00\% & 11.26 & 100.00\% & 0.79 & 0.00\% & 100.00\% & 12.06 & 100.00\% & 11.68 \\
		& 30 & 585 & 993.11 & 100.00\% & 11.07 & 100.00\% & 0.83 & 0.00\% & 100.00\% & 5.61 & 100.00\% & 10.13 \\
		\hline
	\end{tabular}
\end{sidewaystable}

\begin{sidewaystable}
	\footnotesize
	\caption{Network flow restriction results on the `sad' congested cases. For each instance, we show results for 10 different budgets for the percentage of relays that can be attacked. The best known achievable load shed is in the ``Best Known LB" column. In the following two columns we give NFLB as a percentage of the best known solution for the instance and the computational time for Algorithm~\ref{algo:1}. The next three columns show the load shed attained by the solution we get from running (\ref{relay-attack-single-level-linear}) for up to 4 hours, the running time, and Gurobi's optimality gap at termination. The last four columns show the quality of the solution achieved and the times for Gurobi to achieve NFLB when solving problems (\ref{relay-attack-single-level-linear}) and (\ref{relay-attack-single-level-logical}) respectively. \label{table:4}}
	\begin{tabular}{c | r r r | r r | r r r | r r r r }
		\hline 
		\multirow{2}{2.6cm}{Instance} & \multirow{2}{0.9cm}{\makecell{Budget \\ (\%)}} & \multirow{2}{0.9cm}{\makecell{\# of \\ Relays}} & \multirow{2}{1.1cm}{\makecell{Best \\ Known \\ LB}} & \multicolumn{2}{c}{{\bf NFLB}} & \multicolumn{3}{c}{\bf{Question 2}} & \multicolumn{4}{c}{{\bf Question 1}} \\
		& & & & \makecell{Quality} & \makecell{Time \\ (s)} & \makecell{Problem \\ (\ref{relay-attack-single-level-linear}) \\ Quality } & \makecell{Problem \\ (\ref{relay-attack-single-level-linear}) \\ Time (s)} & \makecell{Problem \\ (\ref{relay-attack-single-level-linear}) Gap } &
		\makecell{Problem \\ (\ref{relay-attack-single-level-linear}) \\ Quality} & \makecell{Problem  \\ (\ref{relay-attack-single-level-linear}) Time \\ to NFLB (s)} & \makecell{Problem \\ (\ref{relay-attack-single-level-logical}) \\ Quality} & \makecell{Problem  \\ (\ref{relay-attack-single-level-logical}) Time \\ to NFLB (s)} \\
		\hline
		\multirow{10}{1.9cm}{1354pegase\_sad}& 1 & 14 & 237.57 & 97.60\% & 230.54 & 97.60\% & 14400.03 & 136.01\% & 100.00\% & 14400.03 & 41.14\% & 14400.02 \\
		& 3 & 41 & 533.47 & 100.00\% & 47.20 & 100.00\% & 14400.06 & 29.97\% & 98.98\% & 14400.01 & 63.64\% & 14400.01 \\
		& 5 & 68 & 653.14 & 100.00\% & 22.59 & 100.00\% & 14400.04 & 9.28\% & 99.64\% & 14400.36 & 89.63\% & 14400.01 \\
		& 7 & 95 & 698.13 & 100.00\% & 21.08 & 100.00\% & 14400.06 & 6.21\% & 99.88\% & 14400.03 & 98.69\% & 14400.01 \\
		& 10 & 135 & 735.70 & 100.00\% & 21.86 & 100.00\% & 14400.03 & 0.78\% & 100.00\% & 14400.03 & 99.94\% & 14400.02 \\
		& 13 & 176 & 741.46 & 100.00\% & 13.25 & 100.00\% & 0.46 & 0.00\% & 100.00\% & 173.48 & 100.00\% & 62.05 \\
		& 15 & 203 & 741.46 & 100.00\% & 11.88 & 100.00\% & 0.45 & 0.00\% & 99.99\% & 98.99 & 100.00\% & 8.51 \\
		& 20 & 271 & 741.46 & 100.00\% & 10.23 & 100.00\% & 0.46 & 0.00\% & 100.00\% & 20.86 & 100.00\% & 13.63 \\
		& 25 & 338 & 741.46 & 100.00\% & 8.62 & 100.00\% & 0.46 & 0.00\% & 100.00\% & 8.16 & 100.00\% & 9.98 \\
		& 30 & 406 & 741.46 & 100.00\% & 8.71 & 100.00\% & 0.45 & 0.00\% & 100.00\% & 3.14 & 100.00\% & 11.21 \\
		\hline
		\multirow{10}{1.9cm}{1888rte\_sad}& 1 & 19 & 301.49 & 100.00\% & 53.02 & 100.00\% & 14400.09 & 69.67\% & 100.00\% & 14400.02 & 32.14\% & 14400.03 \\
		& 3 & 57 & 525.74 & 100.00\% & 28.91 & 100.00\% & 14400.03 & 8.31\% & 99.94\% & 14400.03 & 84.64\% & 14400.02 \\
		& 5 & 94 & 572.77 & 100.00\% & 24.08 & 100.00\% & 14400.20 & 2.61\% & 99.99\% & 14400.05 & 96.22\% & 14400.03 \\
		& 7 & 132 & 591.76 & 100.00\% & 20.97 & 100.00\% & 14400.24 & 0.73\% & 99.99\% & 14400.04 & 98.53\% & 14400.02 \\
		& 10 & 189 & 596.07 & 100.00\% & 11.95 & 100.00\% & 3.40 & 0.00\% & 100.00\% & 435.82 & 100.00\% & 43.82 \\
		& 13 & 245 & 596.07 & 100.00\% & 8.38 & 100.00\% & 2.86 & 0.00\% & 100.00\% & 49.02 & 100.00\% & 11.23 \\
		& 15 & 283 & 596.07 & 100.00\% & 8.65 & 100.00\% & 2.21 & 0.00\% & 100.00\% & 23.46 & 100.00\% & 15.20 \\
		& 20 & 378 & 596.07 & 100.00\% & 8.43 & 100.00\% & 3.22 & 0.00\% & 100.00\% & 21.50 & 100.00\% & 10.79 \\
		& 25 & 472 & 596.07 & 100.00\% & 8.06 & 100.00\% & 3.21 & 0.00\% & 100.00\% & 6.15 & 99.99\% & 8.61 \\
		& 30 & 566 & 596.07 & 100.00\% & 8.14 & 100.00\% & 2.39 & 0.00\% & 100.00\% & 2.94 & 99.99\% & 9.31 \\
		\hline
		\multirow{10}{1.9cm}{1951rte\_sad}& 1 & 20 & 451.75 & 100.00\% & 44.28 & 100.00\% & 14400.03 & 61.89\% & 99.94\% & 14400.03 & 49.50\% & 14400.01 \\
		& 3 & 59 & 743.18 & 100.00\% & 37.95 & 100.00\% & 14400.04 & 9.03\% & 100.00\% & 14400.02 & 33.21\% & 14400.02 \\
		& 5 & 98 & 810.79 & 100.00\% & 27.14 & 100.00\% & 14400.11 & 2.82\% & 100.00\% & 14400.05 & 85.76\% & 14400.02 \\
		& 7 & 137 & 834.51 & 100.00\% & 30.46 & 100.00\% & 14400.03 & 0.86\% & 100.00\% & 14400.03 & 99.27\% & 14400.01 \\
		& 10 & 195 & 844.27 & 100.00\% & 21.74 & 100.00\% & 9.57 & 0.00\% & 100.00\% & 812.21 & 100.00\% & 94.47 \\
		& 13 & 254 & 844.27 & 100.00\% & 14.77 & 100.00\% & 0.78 & 0.00\% & 99.99\% & 26.83 & 100.00\% & 19.76 \\
		& 15 & 293 & 844.27 & 100.00\% & 11.44 & 100.00\% & 0.77 & 0.00\% & 100.00\% & 25.56 & 100.00\% & 19.05 \\
		& 20 & 390 & 844.27 & 100.00\% & 11.69 & 100.00\% & 0.77 & 0.00\% & 100.00\% & 10.62 & 100.00\% & 12.25 \\
		& 25 & 488 & 844.27 & 100.00\% & 11.36 & 100.00\% & 0.77 & 0.00\% & 100.00\% & 9.69 & 100.00\% & 10.35 \\
		& 30 & 585 & 844.27 & 100.00\% & 11.08 & 100.00\% & 0.77 & 0.00\% & 100.00\% & 10.94 & 100.00\% & 10.45 \\
		\hline
	\end{tabular}
\end{sidewaystable}

\begin{table}
	\centering
	\footnotesize
	\caption{Network flow restriction results on large test cases. For each instance, we show results for 10 different budgets for the percentage of relays that can be attacked. The load shed attained from the solution given by Algorithm~\ref{algo:1} is in the ``NFLB" column and the computational time to get NFLB is shown in the last column.\label{table:large-nf}}
	\begin{tabular}{c | r r | r r }
			\hline 
			Instance & Budget (\%) & \# Relays & NFLB & NFLB Time (s) \\
			\hline
			\multirow{10}{1.9cm}{2848rte}& 1 & 28 & 282.87 & 223.68 \\
			& 3 & 85 & 470.57 & 52.62 \\
			& 5 & 142 & 510.74 & 50.72 \\
			& 7 & 199 & 530.51 & 42.94 \\
			& 10 & 285 & 538.36 & 28.64 \\
			& 13 & 370 & 538.35 & 22.20 \\
			& 15 & 427 & 538.39 & 14.30 \\
			& 20 & 570 & 538.39 & 13.08 \\
			& 25 & 712 & 538.39 & 12.50 \\
			& 30 & 854 & 538.39 & 17.30 \\
			\hline
			\multirow{10}{1.9cm}{3012wp}& 1 & 30 & 93.74 & 1302.35 \\
			& 3 & 90 & 205.68 & 51.01 \\
			& 5 & 151 & 254.73 & 35.87 \\
			& 7 & 211 & 267.99 & 27.33 \\
			& 10 & 301 & 271.73 & 19.61 \\
			& 13 & 392 & 271.96 & 13.51 \\
			& 15 & 452 & 271.96 & 12.11 \\
			& 20 & 602 & 271.96 & 11.87 \\
			& 25 & 753 & 271.96 & 15.85 \\
			& 30 & 904 & 271.96 & 15.63 \\
			\hline
			\multirow{10}{1.9cm}{3375wp}& 1 & 34 & 203.44 & 311.63 \\
			& 3 & 101 & 372.36 & 225.19 \\
			& 5 & 169 & 470.08 & 64.40 \\
			& 7 & 236 & 505.73 & 49.03 \\
			& 10 & 337 & 519.36 & 37.04 \\
			& 13 & 439 & 520.99 & 31.44 \\
			& 15 & 506 & 520.99 & 20.61 \\
			& 20 & 675 & 520.99 & 20.68 \\
			& 25 & 844 & 520.99 & 20.29 \\
			& 30 & 1012 & 520.99 & 19.68 \\
			\hline
			\multirow{10}{1.9cm}{6468rte}& 1 & 65 & 554.61 & 899.85 \\
			& 3 & 194 & 825.38 & 294.70 \\
			& 5 & 323 & 889.38 & 243.10 \\
			& 7 & 453 & 924.60 & 222.20 \\
			& 10 & 647 & 948.15 & 229.43 \\
			& 13 & 841 & 951.54 & 86.39 \\
			& 15 & 970 & 951.54 & 80.63 \\
			& 20 & 1294 & 951.57 & 48.33 \\
			& 25 & 1617 & 951.57 & 45.94 \\
			& 30 & 1940 & 951.57 & 44.39 \\
			\hline
	\end{tabular}
\end{table}

\subsection{Quality of NFLB}

Without a nontrivial upper bound on the worst-case relay attack problem, we cannot comment precisely on the quality of the network flow restriction.
However, in comparisons with the lower bound attained from solving with a heuristic bound on the dual variables, we see that in 113 out of 120 instances, NFLB was the best bound.
In the 7 instances where NFLB was not the best lower bound, it was $89.25\%$,  $91.73\%$, $99.77\%$, $99.06\%$, $99.88\%$, $99.31\%$, and $97.60\%$ of the best load shed found.
The budgets for which there is a gap between the best-known solution and the network flow restriction solution tend to be small.
This is consistent with the bound from Theorem \ref{thm:thermal-limit-bound} since for these budgets there is relatively little load shed, meaning that the $\ell_1$- norm of the injections is likely quite large relative to its maximum possible value for the instance (when all the load is served), making the right-hand side of (\ref{eq:the-bound}) large.
In this case, the theory suggests that network flow is not as good of an approximation of DCOPF.
However, at least for smaller network sizes, Gurobi is able to solve (\ref{relay-attack-single-level-linear}) for smaller attack budgets with a heuristic bound on the dual variables, and might be a better option.
For larger network sizes, even though we sometimes see a slight gap between the NFLB and (\ref{relay-attack-single-level-logical}) or (\ref{relay-attack-single-level-linear}), the network flow solution still appears to be of extremely good quality.
Additionally, for these networks, (\ref{relay-attack-single-level-logical}) and (\ref{relay-attack-single-level-linear}) do not scale well enough to be computationally tractable: Among the 90 larger instances tested, (\ref{relay-attack-single-level-linear}) fails to achieve the NFLB within 4 hours in 25 instances, and (\ref{relay-attack-single-level-logical}) fails to do so in 39 instances.
Last, note that even in the congested variations of the test networks shown in Tables \ref{table:3} and \ref{table:4}, the quality of NFLB is good despite the theoretical results not holding.

\subsection{Computational Tractability of Algorithm~\ref{algo:1}}

In Tables \ref{table:1}, \ref{table:2}, \ref{table:3}, and \ref{table:4}, Algorithm~\ref{algo:1} takes less than 4 minutes on all of the instances of the problem tested.
In Table \ref{table:large-nf}, Algorithm~\ref{algo:1} takes less than 22 minutes in all cases, and often takes less than 5 minutes.
In contrast, when solving (\ref{relay-attack-single-level-linear}), Gurobi times out without proving optimality within 4 hours for the hardest budgets on all but the smallest test case. 
For the nine larger cases, Gurobi takes more than 4 hours to find a solution of the same quality as NFLB using problem (\ref{relay-attack-single-level-logical}) and with the heuristic value of $M$ in problem (\ref{relay-attack-single-level-linear}).
In essence, we see that scaling up the size of the network for difficult attack budgets is not feasible solving a linearization of the single-level reformulation of (\ref{relay-attack-prob}), even with small heuristic bounds on the duals.
However, we can easily find what we believe to be a good-quality solution for even a 6,468 bus network using Algorithm~\ref{algo:1}.

\begin{figure}
	\centering
	\subfloat[Computational times for solving (\ref{relay-attack-single-level-linear}) with heuristic $M$.]{
		\includegraphics[width=0.5\linewidth]{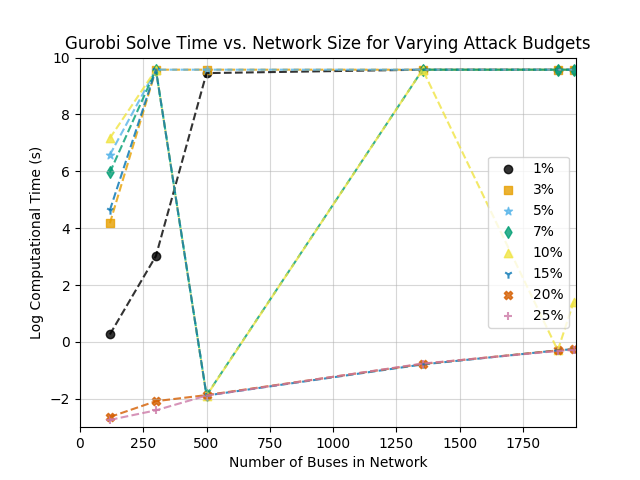}
		\label{sfig:gurobi-scaling}
	}
	\subfloat[Computational times for the network flow restriction.]{
		\includegraphics[width=0.5\linewidth]{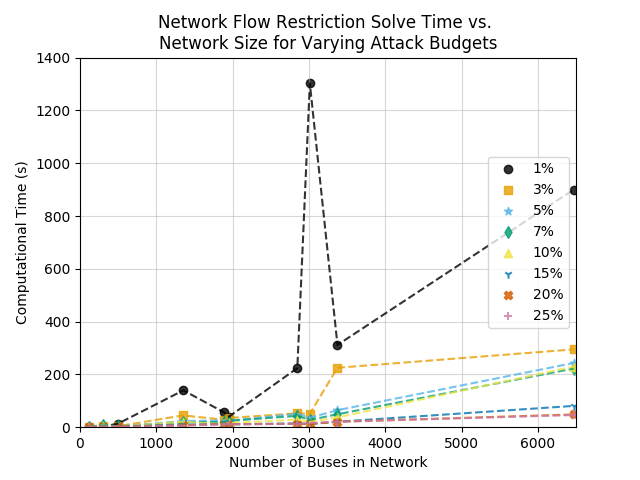}
		\label{sfig:nf-scaling}
	}
	\caption{Computational times for various budgets for both Gurobi with the dual bound set to 2 and for the network flow restriction. The first plot shows the data from the ``Question 1: Problem (\ref{relay-attack-single-level-linear}) Time" column of Tables \ref{table:1} and \ref{table:2}, and the second plot shows the data from the ``NFLB Time" column from Tables \ref{table:1}, \ref{table:2}, and \ref{table:large-nf}. \label{fig:scaling}}
\end{figure}

These observations are visualized in Figure \ref{fig:scaling}:
In Figure \ref{sfig:gurobi-scaling} we plot on a log scale the computational times to solve (\ref{relay-attack-single-level-linear}) linearized using the heuristic value of $M$.
There is noise in the 7\% and 10\% budgets because the most difficult budgets in that range depend on the particular network, not just the number of nodes.
However, in general we see that, even for the easier very large budgets, the solve times appear to scale exponentially.
For the smaller, more difficult budgets, we hit the 4-hour time limit for most of the networks.
In Figure \ref{sfig:nf-scaling}, we plot computational times for all ten of our test networks on a linear scale.
The scaling for this method appears to be roughly linear in the size of the network, where the lower budgets are more difficult and the higher budgets tend to be easier.
The spike for the 1\% budget on the 3,012 bus instance is consistent for different seeds: It appears to be an anomaly in terms of difficulty for Gurobi.

Overall, we find NFLB to be approximately 150 times faster than using Gurobi. 
	We arrive at this number by taking the average of the ratio of the time for Gurobi to reach NFLB using Problem (\ref{relay-attack-single-level-linear}) and the time to compute NFLB over the 120 instances tested.
In summary, we see through our computational experiments that the most difficult instances of the worst-case relay attack problem are for mid-range budgets on large networks.
Solving the traditional linearized single-level formulation including DCOPF in the inner problem does not scale well, even when the bound on the duals is as small as 2 or 3.
In contrast, we are able to solve challenging budgets on networks up to 6,468 nodes in less than 25 minutes using the network flow restriction.
To the extent it is ascertainable, the quality of solutions is good.

\section{Conclusion}\label{sec:conclusions}

In this work, we analyzed a restriction of the worst-case relay attack problem which has theoretical guarantees on uncongested networks and which we have also shown empirically to provide a high-quality lower bound, even on congested networks.
We have shown that, in addition to the apparent tightness of the lower bound, the network flow restriction can be solved efficiently and to scale with a commercial MIP solver.
We suspect this is due in part to the fact that the network flow restriction can be linearized with big-M values of 1, and also in part to the familiar, well-studied structure of network flow itself.

In future work, there is a need to consider upper bounds for this problem and to improve the scalability of exact methods. 
Additionally, higher-complexity restoration models have been shown to be important for $N-k$ models when $k$ is large (\cite{CoffrinBTSB2019}), so there is a need to find scalable solution methods when the defender problem includes elements such as nonlinear approximations of the AC power flow equations, bus shunts, and line charging. Last,
the network flow approximation for DCOPF could be used in place of DCOPF in numerous other problems for both power systems operations and security.
Since power systems are rarely congested in practice, it is likely that this approximation can be of use in order to scale up other problems which currently rely on DCOPF.

\section*{Acknowlegements}
We would like to thank Bryan Arguello, Anya Castillo, Jared Gearhart, and Cynthia Phillips for helpful discussions during this work.

\bibliography{relay_attack_neutral}

\begin{appendices}
\section{Single-Level Formulations of the Worst-Case Relay \\ Attack Problem}\label{sec:single-level-reformulations}
	In this appendix, we give the two single-level formulations of (\ref{relay-attack-prob}) that we compare against problem (\ref{network-flow-single-level-linear}).
	Let $\xi^+$ and $\xi^-$ represent the duals of constraints (\ref{eq:ohms-nonlinear-ub}) and (\ref{eq:ohms-nonlinear-lb}) respectively, and $\kappa^+$ and $\kappa^-$ be the duals of the phase angle bound constraints in (\ref{eq:angle-bounds}).
	Then we have the logical formulation:
	\begin{subequations}\label{relay-attack-single-level-logical}
		\begin{align}
		\max \quad &\begin{aligned}
		& - \sum_{k \in \mathcal K} \left [ F_k ( \overline \lambda_k^+ + \overline \lambda_k^-) + 2\pi B_k (\overline \xi_k^+ + \overline \xi^-_k ) \right ] \\
		&- \sum_{g \in \mathcal G} \overline P_g \overline \gamma_g + \sum_{b \in \mathcal B} \left [ D_b(\overline  \alpha_b + \mu_b - \beta_b) - \pi(\kappa_b^+ + \kappa_b^-) \right ]
		\end{aligned} \\
		\text{s.t.} \quad &\text{(\ref{eq:budget})-(\ref{eq:w-binary})} \nonumber \\
		&\lambda_k^+ - \lambda_k^- + \mu_{d(k)} - \mu_{o(k)} + \xi_k^+ - \xi_k^- = 0 & \forall k \in \mathcal K \quad & (f) \label{eq:flow-dual-with-ohms} \\
		& \mu_{b(g)} - \gamma_g \leq 0  &\forall g \in \mathcal G \quad & (p) \\
		& \alpha_b + \mu_b - \beta_b \leq 1  & \forall b \in \mathcal B \quad & (l) \\
		&\sum_{k \in \mathcal K_b^+} B_k (\xi_b^+ - \xi_b^-) + \sum_{k \in \mathcal K_b^-} (\xi_b^- - \xi_b^+) + \kappa_b^+ - \kappa_b^- = 0  & \forall b \in \mathcal B \quad &(\theta) \label{eq:theta-dual} \\
		& w_b = 0 \implies \overline \alpha_b = \alpha_b & \forall b \in \mathcal B \quad & \phantom{(f)} \label{eq:impl1} \\
		& w_b = 1 \implies \overline \alpha_b = 0 & \forall b \in \mathcal B \quad & \phantom{(f)} \\
		& v_k = 0 \implies \overline \xi_k^+ = \xi_k^+ & \forall k \in \mathcal K \quad & \phantom{(f)} \\
		& v_k = 1 \implies \overline \xi_k^+ = 0 & \forall k \in \mathcal K \quad & \phantom{(f)} \\
		& v_k = 0 \implies \overline \xi_k^- = \xi_k^- & \forall k \in \mathcal K \quad & \phantom{(f)} \\
		& v_k = 1 \implies \overline \xi_k^- = 0 & \forall k \in \mathcal K \quad & \phantom{(f)} \\
		& v_k = 0 \implies \overline \lambda_k^+ = 0 & \forall k \in \mathcal K \quad & \phantom{(f)} \\
		& v_k = 1 \implies \overline \lambda_k^+ = \lambda_k^+ & \forall k \in \mathcal K \quad & \phantom{(f)} \\
		& v_k = 0 \implies \overline \lambda_k^- = 0 & \forall k \in \mathcal K \quad & \phantom{(f)} \\
		& v_k = 1 \implies \overline \lambda_k^- = \lambda_k^- & \forall k \in \mathcal K \quad & \phantom{(f)} \\
		& u_g = 0 \implies \overline \gamma_g = 0 & \forall g \in \mathcal G \quad & \phantom{(f)} \\
		& u_g = 1 \implies \overline \gamma_g = \gamma_g & \forall g \in \mathcal G \quad & \phantom{(f)} \label{eq:impllast} \\
		& \xi^+_k \geq 0 \quad & \forall k \in \mathcal K \quad & \phantom{(f)} \label{eq:nonneg-dual-1}\\
		& \xi^-_k \geq 0 \quad & \forall k \in \mathcal K \quad & \phantom{(f)} \\
		& \alpha_b \geq 0 \quad &\forall b \in \mathcal B\quad & \phantom{(f)} \\
		& \beta_b \geq 0 \quad &\forall b \in \mathcal B\quad & \phantom{(f)}\\
		& \lambda_k^+ \geq 0 \quad &\forall k \in \mathcal K \quad & \phantom{(f)}\\
		& \lambda_k^- \geq 0 \quad &\forall k \in \mathcal K \quad & \phantom{(f)}\\
		& \gamma_g \geq 0 \quad &\forall g \in \mathcal G\quad & \phantom{(f)} \\
		& \kappa^+_b \geq 0 \quad & \forall b \in \mathcal B \quad & \phantom{(f)} \\
		& \kappa^-_b \geq 0 \quad & \forall b \in \mathcal B \quad & \phantom{(f)} \label{eq:nonneg-dual-last}
		\end{align}
	\end{subequations}
	We do not specify upper bounds on the dual variables of DCOPF and we encode constraints (\ref{eq:impl1})-(\ref{eq:impllast}) using Gurobi IndicatorConstraints.
	
	Next, we give a mixed integer linear programming reformulation of (\ref{relay-attack-single-level-logical}) with a heuristic upper bound on the dual variables.
	Let $M$ represent the heuristic bound chosen for the dual variables of the DCOPF problem.
	We use this bound to give a mixed integer linear representation of the implications in (\ref{eq:impl1})-(\ref{eq:impllast}):
	\begin{subequations}\label{relay-attack-single-level-linear}
		\begin{align}
		\max \quad &\begin{aligned}
		& - \sum_{k \in \mathcal K} \left [ F_k ( \overline \lambda_k^+ + \overline \lambda_k^-) + 2\pi B_k (\overline \xi_k^+ + \overline \xi^-_k ) \right ] \\
		&- \sum_{g \in \mathcal G} \overline P_g \overline \gamma_g + \sum_{b \in \mathcal B} \left [ D_b(\overline  \alpha_b + \mu_b - \beta_b) - \pi(\kappa_b^+ + \kappa_b^-) \right ]
		\end{aligned} \\
		\text{s.t.} \quad &\text{(\ref{eq:budget})-(\ref{eq:w-binary}), (\ref{eq:flow-dual-with-ohms})-(\ref{eq:theta-dual}), (\ref{eq:nonneg-dual-1})-(\ref{eq:nonneg-dual-last})} \nonumber \\
		& 0 \leq \overline \alpha_b \leq \alpha_b & \forall b \in \mathcal B \quad & \phantom{(f)} \\
		& \alpha_b - Mw_b \leq \overline \alpha_b \leq M(1 - w_b) & \forall b \in \mathcal B \quad & \phantom{(f)} \\
		& 0 \leq \overline \xi_k^+ \leq \xi_k^+ & \forall k \in \mathcal K \quad & \phantom{(f)} \\
		& \xi_k^+ - Mv_k \leq \overline \xi_k^+ \leq M(1 - v_k) & \forall k \in \mathcal K \quad & \phantom{(f)} \\
		& 0 \leq \overline \xi_k^- \leq \xi_k^- & \forall k \in \mathcal K \quad & \phantom{(f)} \\
		& \xi_k^- - Mv_k \leq \overline \xi_k^- \leq M(1 - v_k) & \forall k \in \mathcal K \quad & \phantom{(f)} \\
		& 0 \leq \overline \lambda_k^+ \leq \lambda_k^+ & \forall k \in \mathcal K \quad & \phantom{(f)} \\
		& \lambda_k^+ - M(1 - v_k) \leq \overline \lambda_k^+ \leq Mv_k & \forall k \in \mathcal K \quad & \phantom{(f)} \\
		& 0 \leq \overline \lambda_k^- \leq \lambda_k^- & \forall k \in \mathcal K \quad & \phantom{(f)} \\
		& \lambda_k^- - M(1 - v_k) \leq \overline \lambda_k^- \leq Mv_k & \forall k \in \mathcal K \quad & \phantom{(f)} \\
		& 0 \leq \overline \gamma_g \leq \gamma_g & \forall g \in \mathcal G \quad & \phantom{(f)} \\
		& \gamma_g - M(1 - u_g) \leq \overline \gamma_g \leq Mu_g & \forall g \in \mathcal G \quad & \phantom{(f)}
		\end{align}
	\end{subequations}

\section{Proofs of Theorems}\label{sec:proofs}

\subsection{Proof of Observation \ref{obs:dual-bounds-are-1}.}\label{sec:proof-dual-bounds-are-1}
Let $I_k$ be the $k\times k$ identity matrix. 
Without loss of generality we may assume that there is exactly one generator per bus.
We can do this because we already assumed the generator dispatch lower bound is 0, so if there are multiple generators at a bus, we can aggregate them into one by summing their maximum capacities.
Note that this means $|\mathcal G| = |\mathcal B|$ in the following.
To show the claim, we will show that the elements of any extreme point of the dual polyhedron, defined by (\ref{nf:dual-flow})-(\ref{nf:dual-actually-last}), are bounded in absolute value by 1.
For notational convenience, let $\{x : Gx \leq h\}$ be the system (\ref{nf:dual-flow})-(\ref{nf:dual-actually-last}).
Note that $G$ is an integer matrix.
Let $n = 3|\mathcal B| + 2|\mathcal K| + |\mathcal G|$, the dimension of the dual space.
Then an extreme point of the polyhedron is the feasible solution where a subsystem of $n$ inequalities from $Gx \leq h$ hold at equality.
Let $\overline G$ denote the square submatrix of $G$ corresponding to this subsystem.
By Cramer's Rule, this means that we can calculate the $i$th component of that solution:
\[
\hat x_i = \frac{\det \left ( \begin{bmatrix}\overline G^1 & \overline G^2 & \cdots & \overline G^{i-1} & h & \overline G^{i+1} &\cdots& \overline G^n \end{bmatrix} \right )}{\det(\overline G)},
\]
where $\overline G^j$ is the $j$th column of $\overline G$.
Since $G$ is integer, we know that $\lvert \det(\overline G) \rvert \geq 1$.
This means that 
\begin{equation}\label{det-bound}
\lvert \hat x_i \rvert \leq \left \lvert \det \left ( \begin{bmatrix} \overline G^1 & \overline G^2 & \cdots & \overline G^{i-1} & h & \overline G^{i+1} &\cdots& \overline G^n \end{bmatrix} \right ) \right \rvert .
\end{equation}
In the following, we show that the right-hand side of (\ref{det-bound}) is 1 by showing the matrix in question is totally unimodular.

We will show that $\begin{bmatrix} G & h \end{bmatrix}$ is totally unimodular since that means any submatrix of $\begin{bmatrix} G & h\end{bmatrix}$ is totally unimodular.
Writing the columns corresponding to the ordering of the variables $(\lambda^+, \lambda^-, \mu, \gamma, \alpha, \beta)$, we can write
\begin{equation}\label{eq:Gh}
\begin{bmatrix} G & h\end{bmatrix} = 
\begin{bmatrix}
I_{|\mathcal K |} & -I_{|\mathcal K |} & N^\top & 0 & 0  & 0 & 0\\
- I_{|\mathcal K |} & I_{|\mathcal K |} & - N^\top & 0 & 0  & 0 & 0\\
0 & 0 & I_{|\mathcal G|} & - I_{|\mathcal G|}  & 0 & 0 & 0 \\
0 & 0 & I_{|\mathcal B|} & 0 & I_{|\mathcal B|} & - I_{|\mathcal B|} & \boldsymbol{1} \\
0 & 0 & 0 & 0 & -I_{|\mathcal B|} & 0 & 0\\
0 & 0 & 0 &  0 & 0 &  -I_{|\mathcal B|} & 0\\
-I_{|\mathcal K|} & 0 & 0 & 0 & 0 & 0 & 0 \\
0 & -I_{|\mathcal K|} & 0 & 0 & 0 & 0 & 0\\
0 & 0 & - I_{|\mathcal G|} & 0 & 0 & 0 & 0\\
\end{bmatrix}
\end{equation}
where, without loss of generality, we relabel the generators so that we get the identity in the part of the matrix corresponding to $\mu$ in constraints (\ref{nf:dual-dispatch}).
We use $N$ to represent the node-arc adjacency matrix of the network, which is known to be totally unimodular.
We use $\boldsymbol{1}$ to represent the vector of all 1's in $\mathbb R^{|\mathcal B|}$.
From (\ref{eq:Gh}), we see it suffices to show that 
\[
A = \begin{bmatrix}
N^\top & 0 \\
I_{|\mathcal G|} & 0 \\
I_{|\mathcal B|} & \boldsymbol{1}
\end{bmatrix}
\]
is totally unimodular since $\begin{bmatrix} G & h \end{bmatrix}$ augments $A$ by a series of identities and the negative of the first row.
It is easy to verify that $A$ is totally unimodular, for example it satisfies the conditions of the theorem by Hoffman (\cite{HellerT1956}).

By the definition of total unimodularity, it follows from the claim above and equation (\ref{det-bound}) that $-1 \leq \hat x_i \leq 1$ for all $i$.
This means that 1 is a valid upper bound for $\alpha$, $\beta$, $\lambda^+$, $\lambda^-$, $\gamma$, and $\mu$, and -1 is a valid lower bound for $\mu$ in problem (\ref{network-flow-single-level}).

\subsection{Proof of Theorem \ref{thm:thermal-limit-bound}}\label{sec:proof-of-thm}

We will first establish some lemmas before we give a proof of Theorem \ref{thm:thermal-limit-bound}.
Let $N$ be the $|V|\times|E|$ node-arc incidence matrix of a connected digraph $G(V, E)$.
We remind the reader of two facts:
\begin{itemize}
	\item The rank of $N$ is $|V| - 1$. 
	\item Let $N^{(i)}$ be the matrix where the $i$th row if $N$ is removed and let $N_i$ be the $i$th row of $N$.
	Then 
	\begin{equation}\label{eq:get-last-row}
	N_i = - \sum_j N_j^{(i)}.
	\end{equation}
	That is, we can calculate any given row of the matrix by taking the negative of the sum of the other rows.
\end{itemize}
In the following Lemma, we derive the injection shift factor formulation for DCOPF.
\begin{lemma}\label{lemma:isf}
	Consider a DCOPF problem over a connected digraph $G(V,E)$.
	Let $B \in \R^{|E|\times |E|}$ be a diagonal matrix where the diagonal entries are the susceptances of the lines.
	Let $0 \in V$ and consider the matrix $N^{(0)}$.
	Let $d \in \R^{|V|}$ be a set of injections which satisfy global balance, i.e.,
	\[
	\sum_j d_j = 0.
	\]
	Let $d^{(0)}$ be the vector of injections where we have removed the component corresponding to the $0$th node.
	Then the unique vector of flows that satisfies (i) nodal balance constraints given the injections $d$ and (ii) Ohm's law is
	\[
	f = B(N^{(0)})^\top \left ( N^{(0)}B(N^{(0)})^\top \right )^{-1} d^{(0)}.
	\]
\end{lemma}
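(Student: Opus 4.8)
The plan is to reduce out the reference node and solve the resulting nonsingular linear system directly. First I would rewrite Ohm's law $f_k = B_k(\theta_{o(k)} - \theta_{d(k)})$ in matrix form as $f = B N^\top \theta$, and observe that this depends on $\theta$ only through angle differences; hence, without loss of generality, I may impose $\theta_0 = 0$. Writing $\theta = (\theta_0, \theta^{(0)})$, the column of $N^\top$ multiplying $\theta_0$ is annihilated, so $f = B (N^{(0)})^\top \theta^{(0)}$. Next I would show that the nodal balance constraints $Nf = d$ are equivalent, under the hypothesis $\sum_j d_j = 0$, to the reduced system $N^{(0)} f = d^{(0)}$: deleting the $0$th equation gives one direction, and conversely, by (\ref{eq:get-last-row}) the $0$th row of $N$ is minus the sum of the remaining rows, so $(Nf)_0 = -\sum_{j \neq 0} (Nf)_j = -\sum_{j \neq 0} d_j = d_0$, recovering the missing equation.

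Substituting $f = B(N^{(0)})^\top \theta^{(0)}$ into $N^{(0)} f = d^{(0)}$ yields the reduced weighted-Laplacian system $L_0\, \theta^{(0)} = d^{(0)}$, where $L_0 := N^{(0)} B (N^{(0)})^\top$. The crux is then to show $L_0$ is invertible. For this I would use that $N$ has rank $|V| - 1$ and that its rows satisfy exactly the single linear dependency $\sum_j N_j = 0$ (each column of $N$ has one $+1$ and one $-1$); consequently any $|V|-1$ of its rows are independent, so $N^{(0)}$ has full row rank $|V|-1$ and $(N^{(0)})^\top$ has trivial kernel. Since $B$ is a positive diagonal matrix, $x^\top L_0 x = \Vert B^{1/2} (N^{(0)})^\top x \Vert_2^2 > 0$ for all $x \neq 0$, so $L_0$ is positive definite, hence invertible.

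With $L_0$ invertible we get $\theta^{(0)} = L_0^{-1} d^{(0)}$ and therefore $f = B(N^{(0)})^\top L_0^{-1} d^{(0)}$, which is the claimed formula. Uniqueness follows immediately: any $f$ satisfying nodal balance and Ohm's law is $f = B N^\top \theta$ for some $\theta$, and after normalizing $\theta_0 = 0$ that potential must solve the nonsingular system $L_0 \theta^{(0)} = d^{(0)}$, so both $\theta^{(0)}$ and $f$ are determined uniquely. I expect the only mildly delicate points to be the equivalence between the full and reduced balance constraints (correctly invoking global balance together with (\ref{eq:get-last-row})) and the full-row-rank claim for $N^{(0)}$; once those are in hand, the rest is routine linear algebra.
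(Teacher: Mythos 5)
Your proposal is correct and follows essentially the same route as the paper's proof: reduce to the grounded system $N^{(0)}B(N^{(0)})^\top\theta^{(0)} = d^{(0)}$, invert using the full row rank of $N^{(0)}$ and positivity of $B$, and recover the deleted balance equation via (\ref{eq:get-last-row}) and global balance. If anything, you are slightly more explicit than the paper about why $L_0$ is invertible (positive definiteness) and about the equivalence of the full and reduced balance constraints, but the argument is the same.
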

\begin{proof}
Let $\theta^{(0)}$ be the vector of phase angles where we have removed the component corresponding to the $0$th node.
Then $f$ and $\theta^{(0)}$ must satisfy
\begin{align}
N^{(0)}f &= d^{(0)} & \text{(nodal balance)} \label{eq:nodal-balance-matrix}\\
f &= B(- N^{(0)})^\top \theta^{(0)} & \text{(Ohm's law)} \label{eq:ohms-law-matrix}
\end{align}
Combining (\ref{eq:nodal-balance-matrix}) and (\ref{eq:ohms-law-matrix}), 
\[
-N^{(0)}B(N^{(0)})^\top \theta^{(0)} = d^{(0)}.
\]
Since $N^{(0)}$ is full row rank and $B$ is a diagonal matrix with all entries positive, $N^{(0)}B(N^{(0)})^\top$ is invertible.
This means
\begin{align}
\theta^{(0)} &= -\left (N^{(0)}B(N^{(0)})^\top \right )^{-1} d^{(0)} \\
\Rightarrow f &= B (N^{(0)})^\top \left (N^{(0)}B(N^{(0)})^\top \right )^{-1} d^{(0)}.
\end{align}
Set the phase angle of the $0$th node to 0.
Then by (\ref{eq:get-last-row}), the resulting phase angles and the vector of flows above satisfy nodal balance constraints and Ohm's law constraints.
\end{proof}

Given a square matrix $H$, let $\lambda_{\max}(H)$ be the largest eigenvalue of $H$.
\begin{lemma}\label{lemma:orthogonal-projection}
	Let $A \in \R^{m \times n}$ be a matrix with full row rank.
	Then $\lambda_{\max}(A^\top (A A^\top)^{-1}A) = 1$.
\end{lemma}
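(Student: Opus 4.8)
The plan is to recognize $P := A^\top (AA^\top)^{-1} A$ as the orthogonal projection onto the row space of $A$, and then to argue that any such projection has eigenvalues lying in $\{0,1\}$, with the value $1$ actually attained. First I would observe that, since $A$ has full row rank, the $m \times m$ matrix $AA^\top$ is symmetric positive definite and in particular invertible, so $P$ is well-defined.

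Next I would verify two algebraic identities. For symmetry, $P^\top = A^\top\big((AA^\top)^{-1}\big)^\top A = A^\top (AA^\top)^{-1} A = P$, using that $AA^\top$ is symmetric. For idempotency, $P^2 = A^\top (AA^\top)^{-1}\big(AA^\top\big)(AA^\top)^{-1} A = A^\top (AA^\top)^{-1} A = P$. Since $P$ is real and symmetric it is diagonalizable with real eigenvalues, and if $Pv = \lambda v$ for some $v \neq 0$ then $\lambda v = Pv = P^2 v = \lambda^2 v$, forcing $\lambda \in \{0,1\}$. Hence $\lambda_{\max}(P) \le 1$.

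It then remains to exhibit an eigenvector with eigenvalue $1$, and this is the one step that genuinely uses the hypothesis. I would pick any $w \in \R^m$ with $w \neq 0$ and set $v := A^\top w$; because $A$ has full row rank, $A^\top$ has full column rank and so $v \neq 0$. A direct computation gives $Pv = A^\top (AA^\top)^{-1} A A^\top w = A^\top (AA^\top)^{-1}(AA^\top) w = A^\top w = v$, so $1$ is an eigenvalue of $P$. Combined with the bound from the previous paragraph, this yields $\lambda_{\max}(P) = 1$.

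I do not expect any real obstacle here: everything reduces to the elementary facts that idempotent matrices have spectrum in $\{0,1\}$ and that symmetric matrices are orthogonally diagonalizable. The only place the assumption on $A$ is needed is to guarantee that the projection is nontrivial — that is, that $1$ and not merely $0$ appears in the spectrum — which is exactly what full row rank supplies.
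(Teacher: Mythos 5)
Your proof is correct and follows essentially the same route as the paper's: both identify $A^\top(AA^\top)^{-1}A$ as an idempotent (orthogonal projection) matrix, conclude the eigenvalues lie in $\{0,1\}$, and use full row rank to ensure the eigenvalue $1$ is attained. The only cosmetic difference is that the paper argues via the rank of the projection being $m$, while you exhibit an explicit eigenvector $A^\top w$; both close the argument in the same way.
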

\begin{proof}
The matrix $A^\top(AA^\top)^{-1}A$ is an orthogonal projection matrix, so all of its eigenvalues are 1 or 0 (since it is idempotent).
Since $A$ has rank $m$, so does $A^\top(AA^\top)^{-1}A$, so $m$ of them are 1, and we have the result.
\end{proof}

\begin{lemma}\label{lemma:bound-l1}
	Let $d \in \R^n$ be such that $\sum_{i = 1}^n d_i = 0$. Then $|\sum_{i \in S} d_i | \leq \frac 1 2 \Vert d\Vert _1$ for all $S \subseteq \{1, 2, \dots n\}$
\end{lemma}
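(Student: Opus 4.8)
The plan is to split the sum over $S$ according to the signs of the entries of $d$ and use the zero-sum hypothesis to identify the sum of the positive entries with exactly half of $\|d\|_1$. Concretely, I would set $P = \{i : d_i \ge 0\}$ and $M = \{i : d_i < 0\}$. Since $\sum_{i=1}^n d_i = 0$, the positive and negative contributions cancel, so $\sum_{i \in P} d_i = -\sum_{i \in M} d_i$; adding these two equal quantities gives $\sum_{i \in P} d_i = \tfrac12 \sum_{i=1}^n |d_i| = \tfrac12 \Vert d\Vert_1$, and likewise $\sum_{i \in M} d_i = -\tfrac12\Vert d\Vert_1$.

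For the upper bound, I would first drop the negative entries lying in $S$ and then enlarge the index set to all of $P$: $\sum_{i \in S} d_i \le \sum_{i \in S \cap P} d_i \le \sum_{i \in P} d_i = \tfrac12\Vert d\Vert_1$. Symmetrically, for the lower bound I would drop the positive entries lying in $S$ and enlarge to all of $M$: $\sum_{i \in S} d_i \ge \sum_{i \in S \cap M} d_i \ge \sum_{i \in M} d_i = -\tfrac12\Vert d\Vert_1$. Combining these two inequalities yields $\bigl|\sum_{i \in S} d_i\bigr| \le \tfrac12\Vert d\Vert_1$, which is the claim.

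There is essentially no obstacle here; the only step that needs any care is the sign bookkeeping, and in particular the identity $\sum_{i \in P} d_i = \tfrac12\Vert d\Vert_1$, which is precisely where the hypothesis $\sum_i d_i = 0$ enters — without it the constant would degrade from $\tfrac12$ to $1$. An equivalent one-line phrasing I could use instead is to observe that $\sum_{i \in S} d_i = -\sum_{i \notin S} d_i$, so this quantity is sandwiched between $-\sum_{i : d_i < 0}|d_i|$ and $\sum_{i : d_i > 0} d_i$, both of which equal $\pm\tfrac12\Vert d\Vert_1$ under the zero-sum assumption.
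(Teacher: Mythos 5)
Your proof is correct, but it takes a slightly different route from the paper's. You partition the \emph{indices by sign}, setting $P=\{i: d_i\ge 0\}$ and $M=\{i:d_i<0\}$, use the zero-sum hypothesis to get the exact identities $\sum_{i\in P}d_i=\tfrac12\Vert d\Vert_1$ and $\sum_{i\in M}d_i=-\tfrac12\Vert d\Vert_1$, and then sandwich $\sum_{i\in S}d_i$ between these two extremes by monotonicity (drop the negative terms of $S$ and enlarge to $P$ for the upper bound, and symmetrically for the lower bound). The paper instead partitions by \emph{$S$ versus its complement}: it observes that $\sum_{i\in S}|d_i|+\sum_{i\notin S}|d_i|=\Vert d\Vert_1$, so the smaller of the two is at most $\tfrac12\Vert d\Vert_1$, and then bounds $|\sum_{i\in S}d_i|$ by both quantities --- by the triangle inequality directly for $S$, and via the identity $\sum_{i\in S}d_i=-\sum_{i\notin S}d_i$ for the complement. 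Both arguments are elementary and use the hypothesis $\sum_i d_i=0$ in an essential way; yours has the small additional virtue of exhibiting the extremal sets ($S=P$ or $S=M$) that attain the bound with equality, while the paper's is symmetric in $S$ and $S^c$ and avoids any case analysis on signs. Either is a complete and acceptable proof.
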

\begin{proof}
The statement is trivially true for $S = \emptyset$ and $S = \{1, 2, \dots, n\}$.
So assume $\emptyset \subsetneq S \subsetneq \{1, 2, \dots, n\}$.
Note first that $\sum_{i \in S} |d_i| + \sum_{i \in \{1, 2, \dots, n\}\setminus S} |d_i| = \Vert d\Vert _1$.
Then we have that
\begin{equation}\label{eq:min-less-than-half}
\min \bigg \{ \sum_{i \in S} |d_i|, \sum_{i \in \{1, 2, \dots, n\}\setminus S} |d_i| \bigg \} \leq \frac 1 2 \Vert d\Vert _1.
\end{equation}
By the triangle inequality we have,
\begin{align}\label{eq:less-sum-S}
\bigg \lvert \sum_{i \in S} d_i \bigg \rvert \leq \sum_{i \in S} |d_i|,
\end{align}
and since $\sum_{i = 1}^n d_i = 0$,
\begin{align}\label{eq:less-sum-not-S}
\bigg \lvert \sum_{i \in S} d_i \bigg \rvert = \bigg \lvert \sum_{i \in \{1, 2, \dots, n\}\setminus S} d_i \bigg \rvert \leq \sum_{i \in \{1, 2, \dots, n\}\setminus S} |d_i|.
\end{align}
Combining (\ref{eq:less-sum-S}) and (\ref{eq:less-sum-not-S}), we get
\[
\bigg \lvert \sum_{i \in S} d_i \bigg \rvert \leq \min \bigg \{ \sum_{i \in S} |d_i|, \sum_{i \in \{1, 2, \dots, n\}\setminus S} |d_i| \bigg \},
\]
so the result follows from (\ref{eq:min-less-than-half}).
\end{proof}

We are ready to prove the theorem.
\begin{proof}{Proof of Theorem \ref{thm:thermal-limit-bound}.}
Since $d$ is flow-polytope feasible, let $f^\nf \in \R^{|\mathcal K|}$ be the flow vector that satisfies thermal limits and nodal balance constraints given the node injection values $d$.
We must show that there exists a flow vector that not only satisfies nodal balance constraints given the injections $d$ and thermal limits, but also Ohm's law.

{\it{\bf Claim 1: } It is sufficient to prove the DCOPF polytope is non-empty on each of the subgraphs corresponding to $V^i$, where we may assume that the $\ell_1$-norm of the injections on the vertices $V^i$  is at most $\Vert d\Vert _1$.}

Claim 1 is straightforward to verify, so we only sketch the arguments here. 
For the arcs connecting vertex blocks $V^i$ and $V^j$ where $i \ne j$, we will keep the flow values from $f^\nf$.
That flow clearly satisfies the thermal limit, and since those arcs are not involved in any cycles, once we find flow values on the incident arcs within each $V^i$, we will be able to find values of $\theta$ such that Ohm's law will also be satisfied.
Thus, the problem reduces to finding flows within blocks of nodes $V^i$ for $i \in \{1, 2, \dots, m\}$.
It is straightforward then to show that the $\ell_1$-norm of the injections on the vertices $V^i$  is at most $\Vert d\Vert _1$.

Consider a block $V$ (we drop the superscript for simplicity), recalling that it has at most $r(G)$ nodes.
Let the net injections on the nodes be $d^V$ such that $\Vert d^V\Vert _1 \leq \Vert d\Vert _1$.
For simplicity of notation, we will refer to the subgraph on $V$ as $H(V,E)$.
Let $N \in \{0, 1, -1\}^{|V| \times |E|}$ be the node-arc incidence matrix of $H$.
Let $B \in \R^{|E|}$ be a diagonal matrix with $B_{ee}$ equal to the susceptance on arc $e$.
Let $v_0 \in V$ be an arbitrarily chosen reference bus, let $N^{(v_0)}$ be as defined before, and let $d^{(v_0)}$ be the vector where we have removed the component corresponding to $v_0$ from $d^V$.
Then by Lemma \ref{lemma:isf}, the unique flow that satisfies the DCOPF constraints on block $V$ is
\begin{equation}\label{eq:calc-flows}
f = B(N^{(v_0)})^\top \left ( N^{(v_0)}B(N^{(v_0)})^\top \right )^{-1} d^{(v_0)}.
\end{equation}
Let $\sqrt B$ be a diagonal matrix whose $(e, e)$th entry is $\sqrt{B_{ee}}$.

{\it {\bf Claim 2: } There exists a vector $\tilde d \in \R^{|E|}$ such that
	\[
	(N^{(v_0)}\sqrt B) \tilde d = d^{(v_0)}, \text{ and } \Vert \tilde d\Vert _2 \leq \frac{\sqrt{r(G)-1}}{2\sqrt{B_{\min}}}\Vert d^V\Vert _1.
	\]}
We will show that there exists $x \in \R^{|E|}$ such that $N^{(v_0)}x = d^{(v_0)}$ and $\Vert x\Vert _2 \leq \frac{\sqrt{r(G)-1}}{2}\Vert d^V\Vert _1$.
This completes the proof since we can then find $\tilde d$ by solving $\sqrt B \tilde d = x$.
In the solution, we will have $\tilde d \leq \frac 1 {\sqrt{B_{\min}}} x$, since $\sqrt B$ is a diagonal matrix and $\sqrt{B_{\min}}$ is the smallest diagonal entry.
This means that 
\[
\Vert \tilde d\Vert _2 \leq \frac 1 {\sqrt{B_{\min}}}\Vert x\Vert _2 \leq \frac{\sqrt{r(G)-1}}{2\sqrt{B_{\min}}} \Vert d^V\Vert _1,
\]
and $(N^{(v_0)} \sqrt B)\tilde d = N^{(v_0)}x = d^{(v_0)},$ as required.

Let $N^{(v_0)} = \begin{bmatrix} P & Q \end{bmatrix}$ where $P$ is composed of columns corresponding to the arcs of a spanning tree in $H(V,E)$.
This means that $P$ is a full row rank square matrix, and furthermore that it is totally unimodular since it is the adjacency matrix of a bipartite graph.
Solve
\begin{equation}\label{eq:tree-flow}
P \hat d = d^{(v_0)}
\end{equation}
and let $x = \begin{bmatrix} \hat d \\ 0 \end{bmatrix}$.
So it is sufficient to show that $\Vert \hat d \Vert _2 \leq \frac{\sqrt{r(G)-1}}{2} \Vert d^V\Vert _1$.
Note that, by (\ref{eq:tree-flow}), $\hat d$ is a flow on the tree corresponding to $P$ where the injections on the nodes are given by $d^{(v_0)}$.
Note that since $P$ represents a tree, the removal of any arc of the graph disconnects the graph.
If we remove arc $i$, let $S_i$ represent the set of nodes in the component containing $o(i)$.
Using this notation, this means that for all arcs $i$, $\hat d_i = \sum_{j \in S_i} d_j^V$.
By Lemma \ref{lemma:bound-l1}, this means that $|\hat d_i| \leq \frac 1 2 \Vert d^V\Vert _1$.
Finally, the support of $\hat{d}$ is at most $r(G) -1$, since a tree on $r(G)$ nodes has $r(G)-1$ arcs.
So $\Vert \hat d\Vert _2 = \sqrt{\sum_{i = 1}^{r(G) - 1} \hat d_i^2} \leq \frac{\sqrt{r(G)-1}}{2}\Vert d^V\Vert _1$, showing Claim 2.

Now, we can rewrite (\ref{eq:calc-flows}) as 
\begin{align*}
f &= B(N^{(v_0)})^\top \left ( N^{(v_0)}B(N^{(v_0)})^\top \right )^{-1} d^{(v_0)} \\
&= B(N^{(v_0)})^\top \left ( N^{(v_0)}B(N^{(v_0)})^\top \right )^{-1} N^{(v_0)}\sqrt B \tilde d \\
&= \sqrt B \sqrt B(N^{(v_0)})^\top \left ( N^{(v_0)}\sqrt B \sqrt B(N^{(v_0)})^\top \right )^{-1} N^{(v_0)}\sqrt B \tilde d \\
&= \sqrt B (N^{(v_0)}\sqrt B)^\top \left ( (N^{(v_0)}\sqrt B) (N^{(v_0)}\sqrt B)^\top \right )^{-1} (N^{(v_0)}\sqrt B) \tilde d
\end{align*}
where the second equality holds by Claim 2 and the last holds since $\sqrt B$ is symmetric.
Therefore
\begin{align*}
\Vert f\Vert _2 &\leq \lambda_{\max} \left ( \sqrt B (N^{(v_0)}\sqrt B)^\top \left ( (N^{(v_0)}\sqrt B) (N^{(v_0)}\sqrt B)^\top \right )^{-1} (N^{(v_0)}\sqrt B)\right) \Vert \tilde d\Vert _2 \\
&\leq \lambda_{\max} ( \sqrt B )\lambda_{\max}\left (N^{(v_0)}\sqrt B)^\top \left ( (N^{(v_0)}\sqrt B) (N^{(v_0)}\sqrt B)^\top \right )^{-1} (N^{(v_0)}\sqrt B) \right ) \Vert  \tilde d \Vert _2 \\
&\leq \sqrt{B_{\max}} \cdot 1 \cdot \frac{\sqrt{r(G)-1}}{2\sqrt{B_{\min}}}\Vert d^V\Vert _1,
\end{align*}
where the last inequality follows Lemma \ref{lemma:orthogonal-projection} and from Claim 2.

By the assumption of the theorem, $\overline F_e \geq \sqrt{\frac{B_{\max}}{B_{\min}}} \frac{\sqrt{r(G)-1} }{2}\Vert d\Vert _1$, for all $e \in E$.
Thus the $f$ above is feasible, completing the proof.
\end{proof}

\subsection{Proof of Corollary \ref{corollary:approximation-works}}\label{sec:proof-approximation-works}

By construction, problems (\ref{eq:dcopf}) and (\ref{eq:nf-relaxation}) are both bounded and feasible (since it is always possible to shed all the load and since $l$ is bounded).
Also, since (\ref{eq:nf-relaxation}) is a relaxation of (\ref{eq:dcopf}), $z^l \leq z^*$.
It is sufficient to show that there exists a solution to (\ref{eq:dcopf}) with the same objective value as (\ref{eq:nf-relaxation}).

Let $(\tilde f, \tilde l, \tilde p)$ be an optimal solution to (\ref{eq:nf-relaxation}).
Since $0 \leq \tilde l \leq D$ and $0 \leq \tilde p \leq \overline P$, it is sufficient to show that the system
\begin{equation}\label{eq:flows}
\begin{aligned}
\sum_{k \in \mathcal K^+(b)} f_k - \sum_{k \in \mathcal K^-(b)} f_k &= D_b - \tilde l_b - \sum_{g \in \mathcal G_b} \tilde p_g & \forall b \in \mathcal B\\
f_k &= B_k(\theta_{o(k)} - \theta_{d(k)}) & \forall k \in \mathcal K \\
f_k &\geq -\overline F_k & \forall k \in \mathcal K \\
-f_k &\geq -\overline F_k & \forall k \in \mathcal K 
\end{aligned}
\end{equation}
has a feasible solution.
Since $\tilde p \geq 0$ and since the injections satisfy global balance, that is
\begin{equation}\label{eq:global-balance}
\sum_{b \in \mathcal B} (D_b - \tilde l_b) = \sum_{g \in \mathcal G} \tilde p_g,
\end{equation}
we have that
\[
\Vert D - \tilde l - \tilde p\Vert_1 \leq \Vert D - \tilde l\Vert_1 + \Vert\tilde p\Vert_1  =  2\Vert D - \tilde l\Vert_1 \leq 2 \Vert D\Vert_1,
\]
where the first inequality follows from the triangle inequality, the equality comes from  (\ref{eq:global-balance}) and the fact that $\tilde l \leq D$ and $0 \leq \tilde p$, and the last inequality again follows from $\tilde l \leq D$. 
Since 
\[
\overline F_k \geq 2 \sqrt{\frac{B_{\max}}{B_{\min}}} \frac{\sqrt{r(G) - 1}}{2} \Vert D\Vert _1 \geq \sqrt{\frac{B_{\max}}{B_{\min}}} \frac{\sqrt{r(G) - 1}}{2} \Vert D - \tilde l - \tilde p\Vert _1,
\]
the feasibility of system (\ref{eq:flows}) follows from Theorem \ref{thm:thermal-limit-bound}, completing the proof.
\qed

\subsection{Proof of Proposition \ref{prop:tight-within-constant}.}\label{sec:proof-tight-within-constant}
Let $n \in \N$ be given.
We construct $G^n(V^n, E^n)$ as follows:
\begin{itemize}
	\item $|V^n| = 3n$. Number the nodes from 1 to $3n$.
	\item For $i = 1, 2, \dots, 3n$, define the injections as follows:
	\[
	d^n_i = \begin{cases}
	-n & \text{ if } i = 3n \\
	1 & \text{ if } 1 \equiv i \pmod 3 \\
	0 & \text{ otherwise.}
	\end{cases}
	\]
	\item Let 
	\[
	E^n = \left ( \bigcup_{i = 0}^{n-1} \{(3i+1, 3i+2), (3i+2, 3i+3), (3i+1, 3i+3)\} \right ) \cup \left ( \bigcup_{i = 0}^{n-2} \{(3i+3, 3i+4)\} \right )
	\]
\end{itemize}
That is, $G^n$ is composed of $n$ triangles and no other cycles.
As an example, the digraph for $n = 3$ is shown in Figure \ref{artifical-graph-n3}.
By construction, for all such digraphs $G^n(V^n, E^n)$, $r(G^n) = 3$.
Let $c = \frac 1 {2\sqrt 3}$.
Let 
\[
\overline F_e = c \sqrt{\frac{B_{\max}}{B_{\min}}}\frac{\sqrt{r(G^n) - 1}}{2} \Vert d^n\Vert _1 = \frac 1 {2 \sqrt 3}\cdot \frac{\sqrt 3}{2}\cdot \left ( 2n \right ) = \frac n 2
\]
for all that non-cut-arcs, i.e., $e \in \{(3i+1, 3i+2), (3i +2, 3i+3), (3i+1, 3i+3) | i = 0, 1, \dots, n-1\}$.
Let
$
\overline F_e = n
$
for the remaining arcs, i.e., $e \in \{(3i+3, 3i+4)|i = 0, 1, \dots, n-2\}$.

There is a flow-polytope feasible flow on $G^n$ given by
\[
f_{(3i+1, 3i+2)} = f_{(3i+2, 3i+3)} = f_{(3i+1, 3i+3)} = \frac{i+1}{2}
\]
for $i = 0, 1, \dots, n-1$ and
\[
f_{(3i+3, 3i+4)} = i+1
\]
for $i = 0, 1, \dots, n-2$.
However, we can show that these injections are not DCOPF-feasible.
To see this, consider the triangle formed by nodes $3n-2$, $3n-1$, and $3n$.
We know that we have an in-flow of $n$ units to node $3n-2$, and that all of it must be routed to node $3n$.
Without loss of generality, suppose the phase angle at node $3n$ is 0.
There are exactly two paths from $3n-1$ to $3n$: the arc between them, and the two-arc path via $3n-1$.
Each of these paths has capacity $n/2$, so we must use both paths at capacity.
However, this is impossible, as it requires setting the phase angle at node $3n-1$ to $n/2$ and setting $\theta_{3n-2}$ to $n$. 
But that means the flow on the arc $(3n-2, 3n)$ is $n > \overline F_{(3n-2, 3n)} = \frac n 2$.
\qed

\end{appendices}

\end{document}